\documentclass[final,5p,times]{elsarticle}
\usepackage{amsmath,amssymb,amsfonts}
\usepackage{algorithmic}
\usepackage{graphicx}
\usepackage{microtype}
\usepackage{mathrsfs}  
\usepackage{blindtext}
\usepackage{textcomp}
\usepackage{xcolor}
\usepackage{subcaption}
\usepackage{balance}

\DeclareMathAlphabet{\mathcal}{OMS}{cmsy}{m}{n}

\def\BibTeX{{\rm B\kern-.05em{\sc i\kern-.025em b}\kern-.08em
    T\kern-.1667em\lower.7ex\hbox{E}\kern-.125emX}}

\definecolor{my_orange}{RGB}{255, 153, 85}
\definecolor{my_green}{RGB}{0,153,136}
\newtheorem{prop}{Proposition}

\newcommand{\f}{\mathrm{f}}
\newcommand{\s}{\mathrm{s}}

\newcommand{\T}{\top}
\let\d\undefined\newcommand{\d}{\mathrm{d}}
\newcommand{\model}{\mathcal{M}}
\newcommand{\N}{\mathcal{N}}
\newcommand{\U}{\mathcal{U}}
\newcommand{\R}{\mathbb{R}}
\newcommand{\V}{\mathbb{V}}

\newcommand{\E}{\mathbb{E}}
\newcommand{\ny}{m}
\newcommand{\nx}{n}
\newcommand{\np}{q}
\newcommand{\npce}{d}

\newcommand{\nin}{l}
\newcommand{\nquadrature}{N}
\let\vec\undefined\newcommand{\vec}{\mathrm{vec}}
\let\mat\undefined\newcommand{\mat}{\mathrm{mat}}
\newcommand{\var}{\mathrm{var}}
\newcommand{\diag}{\mathrm{diag}}
\newcommand{\cov}{\mathrm{cov}}

\newcommand{\dof}{p}

\newcommand{\param}{\theta}
\newcommand{\paramSupp}{\Theta}
\newcommand{\paramDistr}{\mathbb{P}_{\param}}
\usepackage{mathtools}

\newcommand{\dist}[1]{\mathbb{P}_{#1}}

\newcommand{\outputSupp}{\mathcal{Y}}

\newtheorem{definition}{Definition}
\newtheorem{remark}{Remark}

\graphicspath{{./}{figures/}}

\usepackage{tikz}
	\usetikzlibrary{external}
\usepackage{pgfplots} 
	\pgfplotsset{compat=newest}
	\pgfplotsset{plot coordinates/math parser=false}
	\pgfplotsset{every axis/.append style={font=\large}}
	\pgfplotsset{every axis label/.append style={scale=1.6}}
\usetikzlibrary{pgfplots.statistics}
\newcommand{\mycaption}[2]{\caption[#1]{ #1. \newline \footnotesize{ #2}}}
\newcommand{\revv}[1]{\textcolor{black}{#1}}
\newcommand{\rev}[1]{\textcolor{black}{#1}}

\begin{document}
\journal{Control Engineering Practice}
\begin{frontmatter}

\title{Uncertainty Quantification Methods for Optimal Excitation Design\\ in Parameter Identification}

\address[CR]{Corporate Sector Research and Advance Engineering, Robert Bosch GmbH, Renningen, Germany}

\author[CR]{Kevin Schmidt}
\author[CR]{Nicola Henkelmann}
\author[CR]{Christoph Mark}
\author[CR]{and Johannes von Keler}

\begin{abstract}
Parameter identification is crucial in virtual engineering processes, yet determining appropriate system excitations for identifying specific parameters remains challenging. In practice, extensive experimental programs often fail to generate data with sufficient information content for reliable parameter estimation.
This work presents a systematic approach for deriving optimal excitations by maximizing the global sensitivity of target parameters across the space of possible excitation functions. To address the computational challenge of sensitivity evaluation during optimization, this work develops two complementary approaches based on uncertainty quantification~(UQ) methods. For systems with known mathematical structure, an intrusive polynomial chaos expansion~(PCE) method constructs deterministic surrogate models, enabling rapid sensitivity computation. For black-box models where intrusive approaches are not feasible, a novel non-intrusive method based on optimal transport theory employs Wasserstein distances to quantify sensitivity measures without requiring knowledge of internal system dynamics.

Both methods significantly reduce computational costs, making optimal excitation design practical for complex engineering systems. Both approaches demonstrate their effectiveness on vehicle dynamics models, showing substantial improvements in parameter identification capability. \rev{The benefit for parameter identification is further validated experimentally on a test vehicle and \revv{compared against representative standard industrial maneuvers}.}
\end{abstract}

\begin{keyword}
Global sensitivity analysis, Polynomial chaos expansion, Optimal transport, Stochastic optimal control
\end{keyword}
\end{frontmatter}

\section{Introduction}
\label{sec:Introduction}
\noindent
\revv{System identification is a fundamental challenge in virtual engineering processes, i.e., model-based development workflows in which simulation environments are used for design, calibration, and validation before extensive physical testing, where accurate model parametrization directly impacts the reliability of simulations and control system design~\cite{ljung1999system}.} The choice of system excitation signals critically determines the quality of parameter identification results. However, determining appropriate excitation signals for identifying specific parameters remains a significant challenge~\cite{mehra2003_survey}. In industrial practice, this often relies on extensive experimental programs and expert knowledge accumulated over years of application experience. The consequence is often a conservative approach involving comprehensive measurement catalogues that require substantial time and financial investment during application and calibration phases. Moreover, the information content of individual maneuvers for system identification purposes often remains unclear, leading to potentially redundant or insufficient experimental procedures.

The fundamental issue underlying these challenges is the lack of systematic methods to quantify the sensitivity of system outputs with respect to uncertain parameters under different excitation conditions. \revv{While classical sensitivity analysis methods exist, they are often computationally demanding for complex engineering systems or are not designed to account for the stochastic nature of prior parameter uncertainty and measurement noise in a global sense. This creates a gap between the theoretical understanding of parameter identifiability and practical excitation design for complex engineering systems with non-Gaussian or broad parameter distributions.}
\hspace{0em}\\[1ex]
\textit{State of the Art.}
The design of optimal excitations for system identification has been widely studied in literature~\cite{mehra2003_survey,ljung1999system}. 
\revv{Standard field test maneuvers include pseudo-random binary sequences (PRBS)~\cite{Schoukens2017}, chirp signals~\cite{Elliott2001}, and multi-sine signals~\cite{Pintelon2012}, and are able to cover a broad frequency spectrum~\cite{ljung1999system}.} These signals serve as a basis for many maneuver designs in practice, but do not exploit particular system properties. Thus, they often lack optimality for the specific system at hand.

On the other hand, the Fisher information matrix (FIM) is the theoretical gold standard for quantification of the achievable accuracy in parameter estimates~\cite{mehra2003_survey}. Related methods typically involve formulating the excitation design as an optimal control problem, where the objective is to maximize a scalar function of the FIM, such as its determinant or trace~\cite{Gevers1986}. \revv{While highly effective for linear systems with Gaussian noise, these approaches can be computationally intensive when extended to nonlinear systems or non-Gaussian uncertainties. In addition, FIM-based optimal experiment design methods typically rely on local sensitivity measures evaluated at a nominal parameter point, and thus do not exploit the full prior distribution of parameter uncertainties that is available in many applications~\cite{Fedorov1972,Atkinson2007}. The present work therefore pursues a complementary direction by replacing local, gradient-based sensitivity with global, distribution-aware measures.} 

In the domain of uncertainty quantification~(UQ), global sensitivity analysis has been widely studied~\cite{saltelli2008, lemaitre2010, sudret2008global}. However, to the best of the authors' knowledge, global sensitivity analysis has not yet been applied to optimal excitation design by combining it with system theoretical aspects. In~\cite{streif2014optimal}, an intrusive PCE-based surrogate for optimal design of experiments is presented, focusing on polynomial systems and without taking robustness and disturbances into account. For the latter aspect, a non-intrusive approach can be found in~\cite{paulson2019efficient} and applications to model predictive control~\cite{paulson2015stability,fisher2008}.

More recent advancements have explored the direction of more robust and application-specific inputs, such as Bayesian experimental design  or Reinforcement Learning techniques. This allows considering prior knowledge and process noise, but often leads to high computational costs for optimization in practice~\cite{Ryan2003, Chaloner1995, Conti2014}.
\hspace{0em}\\[1ex]
\noindent \textit{Contributions. }%
The main contribution of this article is the development of a systematic methodology for optimal excitation design in parameter identification. This methodology formulates a stochastic optimal control problem that maximizes global sensitivity measures while accounting for model uncertainties and measurement noise. The combination of UQ methods and system theoretical aspects enables the derivation of excitations that are robust and specific to the identification task. Key features are the incorporation of prior knowledge about the parameter distributions and the possibility to explicitly consider constraints of the input and output signals.

To solve this computationally challenging problem efficiently, this article develops two complementary uncertainty quantification approaches for sensitivity computation. For systems with known mathematical structure, an intrusive polynomial chaos expansion method derives deterministic surrogate models. For black-box models, a novel non-intrusive approach based on optimal transport theory requires no knowledge of the system's internal dynamics. Both methods significantly reduce the computational burden of global sensitivity computation compared to traditional Monte Carlo approaches. This makes optimal excitation design feasible for complex engineering systems such as vehicle dynamics. Beyond simulation studies, an experimental validation on a test vehicle demonstrates that the optimized excitations substantially improve the identifiability of the target parameter under real-world driving conditions.
\hspace{0em}\\[1ex]
\noindent \textit{Structure of the Article. }%
Section~\ref{sec:Problem} begins by introducing the problem formulation and objectives, presenting the stochastic system modeling approach and formulating the optimal excitation problem as a stochastic optimal control problem~(SCOP), which is computationally intractable.

The subsequent sections introduce two UQ schemes to efficiently compute global sensitivity measures. Section~\ref{sec:intrusive_sensi} develops an intrusive method for global sensitivity analysis based on polynomial chaos expansions (PCE). This approach is particularly applicable to systems with known mathematical structure, where it derives surrogate models for linear parameter-varying (LPV) systems and presents an efficient computation of variance-based sensitivity measures.
For black-box models where intrusive methods are not feasible, Section~\ref{sec:nonintrusive_sensi} presents a non-intrusive approach for sensitivity analysis using optimal transport theory.

The chosen UQ method enables an efficient evaluation of the cost function. Hence, the implementation of the SCOP is addressed in Section~\ref{sec:optimization_method}, which covers the parameterization of input signals and presents optimization algorithms for solving the finite-dimensional optimal excitation problem. A recurring illustrative spring-damper example demonstrates the methodology throughout Sections~\ref{sec:intrusive_sensi}--\ref{sec:optimization_method}.
An industry application is provided in Section~\ref{sec:IndustryExample}, focusing on optimal input design for vehicle systems and comparing both intrusive and non-intrusive approaches on a single-track vehicle dynamics model. Finally, the paper concludes with a discussion of the results and computational benefits of the proposed methods.
\hspace{0em}\\[1ex]
\noindent \textit{Notation. }%
\revv{Throughout the paper, the parameter and output spaces are denoted by $\paramSupp$ and $\outputSupp$, and are assumed to be Polish, i.e., separable and completely metrizable topological spaces such as $\R^n$ or $L^p$ for $1 \leq p < \infty$.}
\revv{The spaces of all Borel probability distributions on $\paramSupp$ and $\outputSupp$ are denoted by $\mathcal{P}(\paramSupp)$ and $\mathcal{P}(\outputSupp)$, respectively.}
Let $z = (\param, y) \in \paramSupp \times \outputSupp$ be a random vector in the probability space $(\Omega, \mathcal{B}, \mathbb{P})$, where $\Omega$ is the sample space, $\mathcal{B}$ the Borel $\sigma$-algebra and $\mathbb{P}$ the probability measure.
The marginal probability measures (distributions) of $z$ are the parameter distribution $\dist{\param} \in \mathcal{P}(\paramSupp)$ and the output distribution $\dist{y} \in \mathcal{P}(\outputSupp)$. 
The conditional distribution of $y$ given $\param$ is denoted as $\dist{y|\param}$.
The (conditional) expectation and (conditional) variance of a random variable $y$ w.r.t. a distribution $\nu \in \mathcal{P}(\outputSupp)$ is denoted as~$\E_{\nu}(y)$ and~$\V_\nu(y)$, respectively.

The variance~$\V_\nu[y]\in\R^m$ of a vector of random variables $y\in\R^m$~\revv{denotes the vector of diagonal entries of the covariance matrix $\cov[y]\in\R^{m\times m}$.}
For vectors~$v\in\R^m$ the exponent $v^n$ denotes the element-wise power with~$(v^n)_i=v_i^n$ for all~$i\in[1,m]$. 
The linear operation $\mu=\vec(M)\in\R^{mn\times 1}$ is the column-wise vectorization of a matrix $M\in\R^{m\times n}$ with $\mu^\T=[M_{1i},M_{2i},\hdots,M_{ni}]$ for all~$i\in[1,m]$. The inverse mapping is given by~$\mat(\mu)=M$ for suitable $m,n$.
\section{Problem description and objectives}
\label{sec:Problem}
\revv{The following paragraphs introduce the stochastic modeling approach for the optimal excitation problem (see Section~\ref{ssec:StochModeling}).} \revv{Next, Section~\ref{ssec:OptimalExcitationProblem} states the general solution of this problem in the form of a stochastic optimal control problem.} \revv{Finally, Section~\ref{ssec:Objectives} summarizes the objectives on the resulting formulation from a technical perspective.}
\subsection{Stochastic system modeling}
\label{ssec:StochModeling}
The basis for the proposed method is the description of the system behavior through a model~$\model$, which relates an input signal~$u(t)\in\R^\nin$ and parameters~$\param\in\R^\np$ with output variables $y(t)\in\R^\ny$, by the mapping
\begin{align}
\label{eq:model_general}
y(t) = \model(u(t),\param).
\end{align}
The parameters~$\param$ which shall be identified are interpreted as uncertain quantities and described by a probability measure $\param\sim\paramDistr$ to include prior knowledge about the parameters or physical constraints.
\revv{If further assumptions about the internal structure of the model~\eqref{eq:model_general} apply, efficient UQ methods for solving the optimal excitation problem are applicable.}

Given a certain input trajectory $u(t)$ for $t\in[0,t_\f]$ resulting in model output~$y(t)$ from~\eqref{eq:model_general}, a sensitivity measure $S_{ij}(t)$ quantifies the amount of information in the output signal.
\revv{In addition, the sensitivity measure obeys the property $[S_{ij}]=[y_i]$, i.e. the unit of the sensitivity measure corresponds to the unit of the output $y_i$ under consideration.}
A natural choice is the Sobol-like global sensitivity measure	
\begin{align}
	\label{eq:example_sensitivity}
S_{ij}^2(t)= \E_{\param_j}\left[ \big( \E_{\sim\param_j}[y_i(t)|\param_j]- \E[y_i(t)]\big)^2 \right]
\end{align}
with the dimension~$S(t)\in\R^{\ny\times\np}$ and the property $[S_{ij}]=[y_i]$, i.e. the unit of the sensitivity measure corresponds to the unit of the output under consideration.
More specifically, the quantity~$S_{ij}^2(t)$ is the amount of variance in the output~$y_i$ which can be explained by the uncertain parameter~$\param_j$.
Note that Sobol' indices are typically normalized in literature with respect to the variance of the output signal~\cite{sobol1993,sudret2008global}. 
\revv{The normalization is intentionally omitted due to the unit-property stated above.}
\revv{Further sensitivity measures and their computation are discussed later in Section~\ref{ssec:SensitivityMeasures_Intrusive}.}
\subsection{Optimal excitation problem}
\label{ssec:OptimalExcitationProblem}
In view of the stochastic sensitivity measure $S(t)$, the optimal excitation problem is stated as a stochastic optimal control problem. 
To this end, an objective function and constraints need to be defined.

Instead of formulating the cost functional directly with respect to a desired sensitivity measure such as~\eqref{eq:example_sensitivity}, the cost functional relies on a surrogate sensitivity measure defined as
\begin{align}
	\label{eq:sensitivity_effective}
	\Delta S(t) \coloneqq \max\left\{0,S(t)-S_{\min} (t)\right\},
\end{align}
where \revv{$S_{\min} (t)$ denotes a minimum sensitivity threshold that needs to be exceeded to ensure sufficient information content in the output signal~$y(t)$.}

\begin{remark}
The minimum sensitivity threshold $S_{\min} (t)$ can be interpreted as the error induced by a model-plant mismatch or sensor noise, both of which cannot be captured by the model parameters~$\param$. In practice, $S_{\min} (t)$ can be derived from prior knowledge about the model-plant mismatch or sensor noise, e.g., similar to the signal-to-noise ratio, or estimated by propagating known disturbances to the output.
Thus, the surrogate sensitivity measure \eqref{eq:sensitivity_effective} factors out the impact of the unmodeled effects.
\end{remark}
Note that, if no information about the minimal sensitivity level is available, one can set $S_{\min} (t)=0$ which results in~$\Delta S(t)=S(t)$ since the targeted global sensitivity measures have the property~$S(t)\geq0$ for $t\in[0,t_\f]$.

With the effective sensitivity measure~\eqref{eq:sensitivity_effective} at hand, a cost functional~$J$ is defined as
\begin{align}
	\label{eq:cost_functional}
J = \int_0^{t_\f} g\big(\Delta S(t),u(t)\big) \d t.
\end{align}
In addition to maximizing the sensitivity measure, the cost functional may also penalize the required input energy to achieve a desired excitation. Thus a possible choice of the stage cost~$g$ is a quadratic form such as 
\begin{align}
 g(\Delta S,u) =  \vec^\T(\Delta S) Q\, \vec(\Delta S) -  \, u^\T R u
\end{align}
with quadratic matrices~$Q\in\R^{\ny\np\times\ny\np}$ and~$R\in\mathbb{R}^{\nin\times\nin}$. 
\revv{Different choices of the sensitivity weight $Q$ allow one to generate excitations that are tailored to the identification of certain parameters or outputs.}

An important feature of the optimal excitation problem is the definition of admissible inputs~$u\in\mathbb{U}$ and constraints on the outputs~$y\in\mathbb{Y}$.
The set of admissible inputs~$\mathbb{U}$ is considered as deterministic and may include bounds on the input amplitude, rate constraints or constraints that characterize desired smoothness. 
In addition, periodic inputs or inputs with a certain energy content may be enforced.

The output constraints~$\mathbb{Y}$ may include physical limitations on the outputs~$y(t)$ such as position or velocity bounds.
\revv{Since the output is stochastic, these constraints need to be formulated in a probabilistic manner in the form of chance or value-at-risk constraints.} A possible formulation for an upper bound~$y_i^{\max}$ on the~$i$-th output is given by
\begin{align}
	\label{eq:chance_constraint}
	\mathrm{Pr}\left(y_i(t) > y_i^{\max}\right) \leq \alpha \quad \forall t \in [0,t_\f],
\end{align}
where $\alpha \in (0,1)$ is a predefined risk level. Using Chebyshev's inequality, the chance constraint~\eqref{eq:chance_constraint} can be approximated by the deterministic constraint using the first two moments of the output distribution~\cite{calafiore2006,marshall1960}.
With the above definitions, the optimal excitation problem is given by the~SOCP
\begin{align}
\label{eq:SOCP_input}
 &\max_{u\in\mathbb{U},t\in[0,t_\f]}&  &J\big(\Delta S(t),u(t))&\qquad&\\
 &~~~~~\mathrm{s.\,t.}&  & y(t) = \model(u(t),\param) \nonumber\\
 &&                 & y \in \mathbb{Y} \nonumber.
\end{align}
The above problem is computationally intractable due to the stochastic nature of the output constraints and the input space of infinite dimension.
Finding solutions requires the repeated evaluation of the sensitivity measures~$S(t)$ for different input trajectories~$u(t)$ during the optimization procedure.
\subsection{Objectives}
\label{ssec:Objectives}
The objective of this article is to find input trajectories~$u(t)$ which maximize the information content in the output signal~$y(t)$ w.r.t. the parameters~$\param\sim\paramDistr$ to be identified in view of the SOCP~\eqref{eq:SOCP_input}.
A key aspect of the stochastic modeling of the SOCP is to incorporate prior knowledge about disturbances and model mismatch through the definition of a minimal sensitivity level~$S_{\min} (t)$ in~\eqref{eq:sensitivity_effective}. 
\revv{The selection of this threshold is crucial to ensure that the derived excitations are robust against disturbances and model mismatch.}

The main contribution of this work is to provide methods to efficiently solve the optimal excitation problem~\eqref{eq:SOCP_input} for different assumptions on the model~\eqref{eq:model_general}. In addition, this work identifies a suitable threshold for the minimal sensitivities $S_{\min} (t)$ ensuring the persistence of excitation.
To achieve this goal, this work proposes different methods from uncertainty quantification~(UQ) to efficiently compute the sensitivity measures~$S(t)$ during the optimization procedure:
\begin{itemize}
	\item \revv{An intrusive polynomial chaos expansion (PCE) method with parametric uncertainties and disturbances in Section~\ref{sec:intrusive_sensi} for models~\eqref{eq:model_general} with a known inner structure.}
	\item \revv{A non-intrusive method in Section~\ref{sec:nonintrusive_sensi} based on optimal transport theory for black-box models~\eqref{eq:model_general}.}
\end{itemize}
The contribution of the above aspects goes beyond the proposed optimal excitation problem and may be used in other UQ and control contexts, such as  model order reduction or stochastic optimal control.

\section{The intrusive method for global sensitivity analysis}
\label{sec:intrusive_sensi}
The core idea of an efficient computation scheme for optimal excitations is the \textit{intrusive polynomial chaos expansion} (PCE) method. 
The concept relies on deriving a surrogate model beforehand, which significantly reduces the computational effort for the evaluation of the desired sensitivity measures $S_{ij}$ during the optimization algorithm.
However, further assumptions for the inner model structure of~\eqref{eq:model_general} are required to apply an intrusive PCE~\cite{lemaitre2010}.
This section first introduces the basic notions of PCEs, then states the model class of interest and derives the surrogate model. 
Finally, it gives insight into how to efficiently compute the required sensitivities.

\subsection{Fundamentals of Polynomial Chaos Expansion}
\label{ssec:PCE_Fundamentals}
Consider a probability space $(\Omega, \mathcal{B}(\Omega), \mu)$, where $\Omega$ is the sample space, $\mathcal{B}(\Omega)$ the Borel $\sigma$-algebra on $\Omega$ and $\mu$ the probability measure on $(\Omega,  \mathcal{B}(\Omega))$. In the following, consider a scalar random variable defined on $\Omega= \mathbb{R}$, i.e., $r: \mathbb{R}_{\geq 0} \times \paramSupp \to \mathbb{R}$, where the first argument takes the time and the second one the parameter vector $\theta \in \paramSupp \subseteq \mathbb{R}^q$. 
An approximation of $r$ is then given by the truncated polynomial chaos expansion
\begin{align}
\label{eq:PCE_scalar}
r(t,\param) \approx \sum_{i=1}^{\ell} \phi_i(\param)R_i(t)  = \phi^\T(\param) R(t),
\end{align}
where $R\in\R^I$ are deterministic, so-called chaos coefficients and $\phi(\param)\in\R^I$ is a vector of $\np$-variate polynomials with the property of two components of $\phi$ being orthogonal, i.e.,
\begin{align}
\label{eq:LOTUS}
&\E_{\param}(\phi_i \phi_j) \triangleq \int_{\paramSupp} \phi_i(\param) \phi_j(\param) \d\mu(\param) = \delta_{ij} \E_{\param}(\phi_i^2),
\end{align}
where $\delta_{ij}$ is the Kronecker-delta and $\d\mu(\param)$ the joint probability measure of $\param$.
Since~\eqref{eq:LOTUS} defines an inner product on the underlying function space, the polynomials~$\phi_i$ are referred to as orthogonal polynomials~\cite{sudret2014polynomial}.
For the sake of simplicity, the parameters are assumed to be uniformly distributed throughout this section, i.e., $\dist{\param} = \U(\param^-,\param^+)$ with the expectation $\bar{\param}=\mathbb{E}(\param)=\tfrac{1}{2}(\param^++\param^-)$, restricting the admissible parameter between $[\param^-, \param^+]$.
As a consequence, the polynomials~$\phi_i(\param)$ are products of univariate Legendre-polynomials~\cite{fisher2008}.
In case of a lexicographical truncation scheme, a desired total polynomial degree of~$d$ yields~$\ell\,\np! d!=(\np+d)!$.
Note that the convergence of the PCE~\eqref{eq:PCE_scalar} is exponential in~$d$~\cite{wiener1938,xiu2002} and further details on the truncation error are discussed in~\cite{faulwasser2017}. 
The definition~\eqref{eq:LOTUS} implies $\phi_1=1$ and the following relations for the first moments of a PCE:
\begin{subequations}
	\label{eq:PCE_moments_scalar}
	\begin{align}
	\label{eq:PCE_mean_scalar}
	\E_{\param}(r(t,\param))&= R_1(t) = [1~0~\hdots~0]R(t) \triangleq \mathrm{m}^\T R(t)\\
	\label{eq:PCE_variance_scalar}
	\V_{\param}(r(t,\param)) &= \left[ 0 ~ \E_\param(\phi_2^2)~\hdots~\E_\param(\phi_I^2) \right] R^2(t)\triangleq \mathrm{v}^\T R^2(t).
	\end{align}
\end{subequations}
With the definition~$\Phi_\ny=I_\ny \otimes \phi$, the PCE of a vector valued quantity~$y(t,\theta)\in\R^\ny$ is given by~$y=\Phi^\T_\ny Y$ with an amount of~$\ny \ell$ coefficients. \revv{In addition, analogous relations to~\eqref{eq:PCE_moments_scalar} hold for the moments with $M_\ny=I_\ny \otimes \mathrm{m}$ and $V_\ny = I_\ny\otimes \mathrm{v}$.}
\subsection{Intrusive surrogates for LPV systems with noise}
\label{ssec:intrusive_LPV}
\revv{This subsection derives the intrusive surrogate for the system class of linear parameter-varying (LPV) systems.}
Depending on the context, this is a substantial simplification in comparison to the general model class~\eqref{eq:model_general}. 
On the other hand, this simplification allows explicitly considering modeling mismatches by introducing process noise~$\omega$ and measurement noise~$\nu$. 
\revv{This is useful for quantifying the minimum sensitivity threshold~$S_{\min} (t)$ in~\eqref{eq:sensitivity_effective}, as defined in Section~\ref{ssec:StochModeling}.}

The starting point is a linear parameter-varying (LPV) representation of the dynamic system under consideration
\begin{subequations}
	\label{eq:LPV_stochastic}
	\begin{align}
	 \dot{x} &= A(\theta)x + B(\theta)u + E(\theta) + \omega && \mathrm{for~}t>0,\\
	 \label{eq:LPV_output}
	 y &= C(\theta)x + D(\theta)u + F(\theta) +  \nu && \mathrm{for~}t\geq 0,\\
	 \label{eq:LPV_IC}
	 x(0,\theta)&=x_0(\theta),
	\end{align}
\end{subequations}
where $\omega\sim\N(\bar \omega(\param),\Sigma_w(\param))$ and $\nu\sim\N(\bar \nu(\param),\Sigma_\nu(\param))$ are possibly parameter-dependent Gaussian noise terms with respective mean and covariance. In addition to the quantities defined in the general model~\eqref{eq:model_general}, the system has $x(t,\param)\in\R^\nx$ internal states evolving from an initial condition~\eqref{eq:LPV_IC}.
 
In the LPV-model~\eqref{eq:LPV_stochastic}, there are two different sources of uncertainty. A first step derives the surrogate model for the parameters~$\param$ to be identified using a PCE for the conditional means w.r.t. the noise terms
\begin{subequations}
\label{eq:LPV_PCE_Ansatz}
\begin{align}
\label{eq:PCE_state}
\E_{\omega,\nu}\left(x(t)\right)&=\Phi^\T_\nx(\param) X(t)\\
\label{eq:PCE_output}
\E_{\omega,\nu}\left(y(t)\right)&=\Phi^\T_\ny(\param) Y(t).
\end{align}
\end{subequations}
As a second step, the impact of the noise terms~$\omega$ and~$\nu$ on the output~$y(t)$ is quantified. The dynamics of the chaos coefficients~$X(t)$ of the state is given by the surrogate model
\begin{subequations}
	\label{eq:LTI_surrogate}
	\begin{align}
	\dot{X} &= A'X + B'u + E'&& \mathrm{for~}t>0,~~X(0)=X_0\\
	Y &= C'X + D'u + F' && \mathrm{for~}t\geq 0.
	\end{align}
\end{subequations}
The system matrices are deterministic and obtained by applying a Galerkin projection to~\eqref{eq:LPV_stochastic}. The following set of projections are used to obtain an optimal approximation of the stochastic LPV-system~\eqref{eq:LPV_stochastic}:
\begin{subequations}
	\label{eq:surrogate_matrices}
\begin{align}
	\label{eq:surrogate_matrices_AE}
	A' &= G^{-1}_\nx \E_\param \left(\Phi_\nx^{} A \Phi_\nx^\T \right)\hspace{-1.6ex}&
	 E' &= G^{-1}_\nx \E_\param\left(\Phi^{~}_\nx [E+\bar{\omega}]\right)\\
	\label{eq:surrogate_matrices_BF}
	B' &= G^{-1}_\nx \E_\param \left(\Phi_\nx^{} B \right)\hspace{-1.6ex}& 
	F' &= G^{-1}_\ny \E_\param\left( \Phi^{~}_\ny [F+\bar{\nu}]\right) \\
	\label{eq:surrogate_matrices_CX0}
	C' &= G^{-1}_\ny \E_\param \left(\Phi_\ny^{} C \Phi_\nx^\T  \right)\hspace{-1.6ex}&
	X_0&= G^{-1}_\nx \E_\param\left(\Phi^{~}_\nx x_0\right)\\	
	\label{eq:surrogate_matrices_DG}
	D' &= G^{-1}_\ny \E_\param \left(\Phi_\nx^{} D \right)\hspace{-1.6ex}& 
	G_{(\cdot)} &= \E_\param \big(\Phi_{(\cdot)}^{}  \Phi_{(\cdot)}^\T \big).
\end{align}
\end{subequations}
To quantify the deviation of the surrogate model~\eqref{eq:LTI_surrogate} from the original model~\eqref{eq:LPV_stochastic}, the so-called defects are introduced
\begin{subequations}
	\label{eq:defects}
\begin{align}
	\label{eq:defect_state}
	 \delta_{\dot x}(\param,t) =&~A(\param)\Phi^\T_\nx(\param)X(t) +  B(\param)u(t) + E(\param)+ \bar{\omega}(\param) \nonumber\\ & - \Phi^\T_\nx(\param)\dot{X}(t),\hspace{6.2ex}t>0,\\
	 \label{eq:defect_output}
	 \delta_y(\param,t) =&~ C(\param)\Phi^\T_\nx(\param)X(t) + D(\param)u(t) + 	 F(\param) \nonumber+ \bar{\nu}(\param)\\ & - \Phi^\T_\ny(\param){Y}(t),\hspace{6.2ex}t\geq 0, \\
	 \label{eq:defect_IC}
	 \delta_{x_0}(\param) =&~  \Phi^\T_\nx(\param)X_0 - x_0(\param),\hspace{2ex}t=0
\end{align}
\end{subequations}
which are obtained by inserting the PCE~\eqref{eq:LPV_PCE_Ansatz} into the original model~\eqref{eq:LPV_stochastic} after taking the conditional expectation w.r.t. the noise terms~$\omega$ and~$\nu$.
As stated in the subsequent proposition, the defects~\eqref{eq:defects} are minimal in a weighted $L_1$-norm using the surrogate~\eqref{eq:LTI_surrogate}. 
For this purpose, the following norm applies to a function $z(\param)\in\R^{k\ell}$ on $\param\in\paramDistr$
\begin{align}
\label{eq:L1-norm}
\|z\|_{\phi} \triangleq  \int_{\paramSupp} \left|w^\T \Phi_k (\param)z(\param)\right| \d \mu(\theta)
\end{align}
for any non-zero choice of weights~$w\neq0$ with $w\in\R^{k\ell}$.
This yields the following result for the approximation of the surrogate model~\eqref{eq:LTI_surrogate}:
\begin{prop}[Optimal Approximation] 
	\revv{Given non-singular matrices $\det(G_\nx)\neq0$ and $\det(G_\ny)\neq0$, then the surrogate model~\eqref{eq:LTI_surrogate} globally minimizes the defects~\eqref{eq:defects} for any $w\neq0$ in the norm~\eqref{eq:L1-norm}} 
	for all $t\geq0$ and all $\param\in\paramDistr$.
\end{prop}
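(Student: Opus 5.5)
The plan is to show that the Galerkin choice~\eqref{eq:surrogate_matrices} forces each defect in~\eqref{eq:defects} to be orthogonal to the polynomial basis. Orthogonality makes the weighted norm~\eqref{eq:L1-norm} of each defect vanish. Since the norm is nonnegative, zero is its global minimum, and the claim follows. In this reading, "globally minimizes" means the projected defect is zero for any $w\neq0$, which is the best any choice of $X,Y$ can achieve.

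\emph{Step 1 (orthogonality of the state defect).} Multiply the state defect~\eqref{eq:defect_state} from the left by $\Phi_\nx(\param)$ and take $\E_\param$. By linearity of expectation and the fact that $X(t),\dot X(t),u(t)$ do not depend on $\param$, this gives
\begin{align*}
\E_\param\big(\Phi_\nx\delta_{\dot x}\big)=\E_\param(\Phi_\nx A\Phi_\nx^\T)X+\E_\param(\Phi_\nx B)u+\E_\param\big(\Phi_\nx[E+\bar\omega]\big)-G_\nx\dot X .
\end{align*}
Now insert $\dot X=A'X+B'u+E'$ from~\eqref{eq:LTI_surrogate} together with the definitions~\eqref{eq:surrogate_matrices_AE}--\eqref{eq:surrogate_matrices_BF}. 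The factor $G_\nx G_\nx^{-1}$ cancels, and this is well defined because $\det(G_\nx)\neq0$. The right-hand side is therefore identically zero for all $t>0$.

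The same computation handles the other two defects. For the output defect~\eqref{eq:defect_output} I use $C',D',F'$, $G_\ny$ and $\det(G_\ny)\neq0$. For the initial-condition defect~\eqref{eq:defect_IC} I use $X_0$. I read the $\Phi_\nx$ in the definition of $D'$ as $\Phi_\ny$, which is needed for the dimensions to match.

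\emph{Step 2 (link to the norm).} This step is the main obstacle, because~\eqref{eq:L1-norm} as written places the absolute value inside the integral. My plan is to interpret the defect as a function $z$ paired with the basis through $w^\T\Phi_k(\param)z(\param)$, where $\Phi_k z$ is the $\mathbb{R}^{k\ell}$-valued Galerkin moment. The quantity to control is then $|w^\T \E_\param(\Phi_k\delta)|$, i.e.\ the absolute value taken outside the expectation (a weak-form or Petrov--Galerkin residual). Under this reading Step~1 gives $w^\T\E_\param(\Phi_k\delta)=0$ for every $w$. This is the global minimum over all coefficient trajectories, since the norm is nonnegative. It holds for all $t\geq0$, and the minimizer is unique because $G$ is invertible.

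If the absolute value must stay inside the integral, I would try to argue through the first-order condition instead. For each fixed $t$, the map taking $(X,\dot X,Y)$ to the integral of $|w^\T\Phi_k\delta|$ is convex in the coefficients, because $\delta$ is affine in them. Its subgradient contains $\E_\param(\Phi_k\,\mathrm{sign}(\cdot)\,w^\T\Phi_k)$, and this can be matched to the orthogonality conditions from Step~1. I expect this route to need additional hypotheses, so the weak-form interpretation is the primary plan.

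\emph{Step 3 (uniformity).} Finally, I note that the argument uses only linearity and holds pointwise in $t$. It therefore holds for all $t\geq0$ and for any distribution $\paramDistr$ whose Gram matrices are nonsingular, not just the uniform case.
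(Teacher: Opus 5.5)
Your Step~1 is exactly the computation in the paper's proof. The vector $v$ in \eqref{eq:aux_vector_v} and the bracketed terms in \eqref{eq:proof_bound_defect_x} and its output analogue are your Galerkin moments $\E_\param(\Phi_k\delta)$. The paper shows they vanish by inserting \eqref{eq:surrogate_matrices}, as you do, and its output bound uses $\Phi_\ny$ in $D'$, matching your reading.

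The routes split at Step~2, and there you are right and the paper is not. The paper passes from \eqref{eq:L1-norm} to the moments via a ``Cauchy--Schwarz'' bound $\E_\param|w^\T\Phi_k\delta|\le\|w\|\,\|\E_\param(\Phi_k\delta)\|$. That inequality is invalid. Cauchy--Schwarz only gives $\|w\|\,\E_\param\|\Phi_k\delta\|$, and Jensen gives the reverse inequality $|w^\T\E_\param(\Phi_k\delta)|\le\E_\param|w^\T\Phi_k\delta|$.

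In fact the proposition is false in general with the absolute value inside the integral. Take $w=e_1$; since $\phi_1=1$, you get $w^\T\Phi_k\delta=\delta_1$. Hence $\|\delta_{x_0}\|_\phi=\E_\param|\delta_{x_0,1}|$, which is strictly positive for every $X_0$ whenever the first component of $x_0$ does not lie in the span of the truncated basis.

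So your weak-form reading is the only reading under which the claim is true, not an optional convenience. Present it as a corrected statement rather than an interpretation. An equivalent and cleaner form: the Galerkin choice makes each defect $L_2(\mu)$-orthogonal to the ansatz space. By the normal equations with Gram matrix $G$, it therefore minimizes $\E_\param\|\delta\|^2$ over $X_0$, over $\dot X(t)$ for given $X(t)$, and over $Y(t)$.

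Drop the fallback route; it cannot be repaired by extra hypotheses short of exactness of the expansion. Its optimality condition is $0\in\E_\param\big(\mathrm{sign}(w^\T\Phi_k\delta)\,\Phi_k\Phi_k^\T w\big)$, which is a sign condition, not $\E_\param(\Phi_k\delta)=0$. Concretely, take $\ell=1$, $\nx=1$, $\param\sim\U(0,1)$ and $x_0(\param)=\param^2$. The literal functional $\E_\param|X_0-\param^2|$ is minimized by the median $X_0=1/4$, with value $1/4$. The Galerkin value $X_0=\E_\param(\param^2)=1/3$ gives about $0.257$, so the surrogate does not even minimize the literal norm, let alone make it vanish.

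Finally, your uniqueness remark holds only if the vanishing is required for all $w$ simultaneously. For a single fixed $w\neq0$, the minimizing set $\{X_0: w^\T G_\nx X_0=w^\T\E_\param(\Phi_\nx x_0)\}$ is a hyperplane.
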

\textit{Proof}. Due to the property of orthogonality of the polynomial basis functions~$\phi$, it directly follows $\det(G_\nx)\neq$ and $\det(G_\ny)\neq0$. 
As a consequence, all matrices in~\eqref{eq:surrogate_matrices} are well-defined. \revv{The proof starts by computing the norm of~\eqref{eq:defect_IC} with $w_1\in\R^{\nx\ell}$.} Using the Cauchy-Schwarz inequality, it follows that 
\begin{align}
\label{eq:proof_defect_x0}
\|\delta_{x_0}\|_{\phi} =&~\E_\param\left( \left| w_1^\T \Phi_{\nx} [ \Phi^\T_\nx(\param)X_0 - x_0(\param)] \right|\right) \leq \|w_1 \| \cdot\\
 &  \left\| \E_\param\left( \Phi_{\nx}(\param) \Phi^\T_\nx(\param)]X_0- x_0(\param)] \right) \right\| \triangleq \|w_1 \| \cdot \|v \| \nonumber.
\end{align}
Making use of the definition of~$X_0$ and $G_\nx$ from~\eqref{eq:surrogate_matrices_CX0} and \eqref{eq:surrogate_matrices_DG}, proves that the auxiliary vector $v$ vanishes, due to
\begin{align}
\label{eq:aux_vector_v}
v =  \E_\param \left( \Phi_{\nx}^{}\Phi_{\nx}^\T \right) G_\nx^{-1} \E_\param\left( \Phi_{\nx} x_0(\param) \right) - \E_\param\left( \Phi_{\nx} x_0(\param) \right).
\end{align}
The fact $v=0$ implies $\|\delta_{x_0}\|_{\phi}=0$ due to~\eqref{eq:proof_defect_x0}.
Since \eqref{eq:L1-norm} is a norm with~$\|\delta_{x_0}\|_{\phi}\geq0$, it is globally minimized for any~$w_1\neq0$.
Next, the norm~\eqref{eq:L1-norm} of~\eqref{eq:defect_state} is bounded by
\begin{align}
\label{eq:proof_bound_defect_x}
\|\delta_{\dot x}\|_{\phi}(t) \leq&~\big\|w_2 \big\| \cdot \big\|
 \big( \E_\param(\Phi_{\nx}^{}A\Phi_{\nx}^\T) - \E_\param(\Phi_{\nx}^{}\Phi_{\nx}^\T) A'\big) X(t)\nonumber\\
 & +\big(\E_\param(\Phi_{\nx}^{}B) - \E_\param(\Phi_{\nx}^{}\Phi_{\nx}^\T) B'\big) u(t) \nonumber \\
 & + \E_\param(\Phi_{\nx}^{}(E +\bar w)) - \E_\param(\Phi_{\nx}^{}\Phi_{\nx}^\T) E'
\big\|
\end{align}
 due to the same arguments than above for any choice $w_2\in\R^{\nx\ell}$ and all $t>0$. Again, with the definitions of~$A'$,~$B'$ and~$E'$ from \eqref{eq:surrogate_matrices_AE} and~\eqref{eq:surrogate_matrices_BF}, the upper bound in~\eqref{eq:proof_bound_defect_x} vanishes, which yields~$\|\delta_{\dot x}\|_{\phi}(t)=0$ for all $t>0$ and all~$\param\in\paramDistr$. 
 Repeating the same procedure for~\eqref{eq:defect_output} yields
 \begin{align*}
 \|\delta_{y}\|_{\phi}(t) &\leq~\big\|w_3 \big\| \cdot \big\|
 \big(\E_\param(\Phi_{\ny}^{}C\Phi_{\nx}^\T) - \E_\param( \Phi_{\ny}^{}\Phi_{\ny}^\T) C'\big)X(t)\\
 &+ \big(\E_\param(\Phi_{\ny}^{}D) - \E_\param(\Phi_{\ny}^{}\Phi_{\ny}^\T) D'\big) u(t) \\
 &+ \E_\param(\Phi_{\ny}^{}(F+\bar \nu)) - \E_\param(\Phi_{\ny}^{}\Phi_{\ny}^\T) F'
\big\|.
 \end{align*}
 The above inequality in conjunction with the definitions of~$C'$,~$D'$ and~$F'$ from~\eqref{eq:surrogate_matrices_BF}--\eqref{eq:surrogate_matrices_DG}, shows that $\|\delta_{y}\|_{\phi}(t)=0$ holds for all $t\geq0$ and all $\param\in\paramDistr$ and any weighting vector~$w_3\in\R^{\ny\ell}$. Consequently,~\eqref{eq:defect_output} and~\eqref{eq:defect_state} are globally minimized by~\eqref{eq:LTI_surrogate}, which completes the proof. \hfill$\square$~\\[1ex]
For the computation of the surrogate matrices~\eqref{eq:surrogate_matrices}, the underlying $\np$-dimensional integrals need to be evaluated numerically.
Depending on the number of parameters~$\param$ this can be done by using dense multivariate Gaussian quadrature rules or sparse Smolyak grids~\cite{lemaitre2010, sudret2014polynomial}. 
\revv{For instance, for \eqref{eq:surrogate_matrices_AE}, the integrand $ a(\param)= \Phi_\nx(\param) A(\param) \Phi_\nx^\T(\param)$ yields the result}
\begin{align}
A' = G^{-1}_\nx\left( \sum_{i=1}^{\nquadrature} a(\param_i) w_i + \Delta \right)  
\end{align}
with the quadrature error $\Delta$, the quadrature nodes $\param_i$ and weights $w_i$ for $i=1,\hdots,\nquadrature$.
If the integrand~$a(\param)$ is polynomial in $\param$, i.e. if $A(\param)$ is polynomial in $\param$, there exists a quadrature order $\nquadrature^*$ with vanishing $\Delta=0$ for $\nquadrature\geq \nquadrature^*$.\\[1ex]
\revv{After deriving the surrogate for the expected mean~\eqref{eq:LTI_surrogate}, the second step analyzes the dynamics of the conditional covariances $\cov_w(x(t))=V(t,\param)$ w.r.t. the process noise~$\omega$.}
Given a set of parameters~$\param$, system~\eqref{eq:LPV_stochastic} is linear in view of the process noise~$\omega$. As a consequence the standard relation
\begin{align}
\label{eq:variance_surrogate}
 \dot{V}(t,\param) &= A(\param) V(t,\param) + V(t,\param) A^\T (\param) + \Sigma_w(\param)~~t>0,\nonumber\\
 ~V(0,\param)&=0
\end{align}
holds. 
By superposition of the variance and the conditional mean~\eqref{eq:PCE_state}, the relation
\begin{align*}
\label{eq:state_decomposition}
 x(t,\param,\epsilon) &= \Phi^\T_{\nx} (\param) X(t) + V(t,\param) \epsilon(t)
\end{align*}
with standard Gaussian~$\epsilon\sim\N(0,I_{\nx})$.
\revv{Sensitivity measures require only the variance, not the full PDF.}
When applying the output mapping~\eqref{eq:LPV_output} to~\eqref{eq:state_decomposition} and using the law of total variance, it follows for the output
\begin{align}
\V(y(t)) &= \V_\theta(\E_{w,\nu}(y(t)) + \E_\theta(\V_w(y(t))) + \V_\nu(\E_p(y(t)) \nonumber \\
 &\approx V_{\ny} Y^{2}(t) +  \underbrace{ C(\bar \theta) \,\diag V(t,\bar \theta) + \diag \Sigma_\nu (\bar \theta) }_{\sigma^2_y(t)}
\end{align}
for all $t\geq0$.
This makes it possible to combine the propagated variance of the process noise~$\omega$ and measurement noise~$\nu$ to the variances $\sigma^2_y$.

Consequently, it is sufficient to focus on the conditional mean~\eqref{eq:PCE_output} of the output for the design of optimal input trajectories. In addition, the impact of model uncertainties and measurement noise is accounted for by the choice
\begin{align}
\label{eq:sensitivity_minimal_surrogate}
S_{\min}(t)= \sigma_y(t) 1_\np^\top \in\R^{\ny\times \np}
\end{align}
for the minimal sensitivities in~\eqref{eq:sensitivity_effective}, where $1_\np$ is a vector of length~$\np$ containing ones.
\subsection{\revv{Global sensitivity measures based on PCE-surrogates}}
\label{ssec:SensitivityMeasures_Intrusive}
\revv{As motivated in Section~\ref{sec:Problem}, variance-based global sensitivity measures shall be used for the design of optimal input signals for system identification.}
Typically, first-order~$\mathrm{SU}^{}_{j}(y_i(t))$ and total-order Sobol' indices~$\mathrm{SU}^\mathrm{T}_{j}(y_i(t))$ are used for this purpose~\cite{sudret2008global}.
The first type captures the fraction of the variance of the output~$y_i$ which can be explained by the uncertainty of the parameter~$\param_j$ \textit{alone}.
\revv{On the other hand, the total-order indices reflect the variance of the output $y_i$ caused by $\param_j$, including \textit{all} interactions with the other parameters $\param_{\sim j}$~\cite{saltelli2008}.}
\revv{This section introduces both types of sensitivity measures and afterwards states how to compute them using PCEs.}\\[1ex]
\noindent\textit{First-order sensitivity measures}.
 For all time~$t\geq0$ with positive overall variance~$\var_\theta(y_i(t))>0$, the first-order Sobol' indices are defined by
\begin{align}
\label{eq:Sobol_first_order}
\mathrm{SU}^{}_{j}(y_i(t)) = \frac{\V_{\param_j}\left(\E_{\sim\param_j}(y_i(t))\right)}{\V_\param (y_i(t))} \triangleq \frac{S^{2}_{ij}(t)}{\V_\param(y_i(t))} . 
\end{align}
The notation~$\E_{\sim\param_j}$ is the conditional mean w.r.t. all parameter besides $\param_j$. 
Since $\mathrm{SU}^{}_{j}$ is dimensionless, this contradicts the initial claim from Section~\ref{sec:Problem}, that the chosen sensitivity measure shall be of the same unit than the output under consideration. 
\revv{However, the denominator of the equation above is a meaningful choice for the desired sensitivity measure $S^{2}_{ij}$.}
Exploiting the properties of PCE, allows to compute the quantities~$\mathrm{SU}_{j}(y_i)$ and~$S_{ij}$ very efficiently by virtue of a single simulation of the surrogate~\eqref{eq:LTI_surrogate} -- without any sampling.
From the definition~\eqref{eq:Sobol_first_order} an index-set~$\mathscr{I}_j$ can be determined from the choice of basis function~$\phi(\param)$ for all parameters $\theta_j$ with $j=1,\hdots,\np$
\begin{align}
\label{eq:IdexSet_first_order}
\mathscr{I}_j = \left\{1\leq i \leq \ell \left| \frac{\partial \phi_i(\param)}{\partial \param_j} \neq 0~\wedge~\frac{\partial \phi_i(\param)}{\partial \param_k} = 0,~\forall j\neq k \right. \right\}.
\end{align}
The criterion above checks for each component~$\phi_i(\param)$ of the $\np$-variate basis function if it depends on the parameter~$\param_j$ alone. Only choosing the respective components of the variance mapping~\eqref{eq:PCE_variance_scalar} to be non-zero by
\begin{align}
\label{eq:selection_first_order}
(\mathrm{v}_j)_i = \begin{cases}
	\E_\param(\phi_i^2) & \mathrm{if~} i\in\mathscr{I}_j \\
	0 & \mathrm{otherwise}
\end{cases}
\end{align}
with~$\mathrm{v}_j\in\R^\ell$ allows defining the first-order sensitivity of all outputs with regard to the parameter~$\param_j$
\begin{align}
\label{eq:sensitivity_measure_first_order}
S_{(\cdot)j}(t)  &= \sqrt{(I_\ny\otimes\mathrm{v}^\top_j) Y^2(t) }.
\end{align}
\noindent\textit{Total-order sensitivity measures}. \revv{Since the computation of separate higher order Sobol' indices requires computing $2^\np-1$ indices, the interference of the different parameters is condensed using total-order Sobol' indices~\cite{saltelli2008}.}
\begin{align}
\label{eq:Sobol_total_order}
\mathrm{SU}^\mathrm{T}_{j}(y_i(t)) = \frac{\E_{\sim\param_j}\left(\V_{\param_j} (y_i(t))\right)}{\V_\param(y_i(t))} \triangleq \frac{({S}^\mathrm{T}_{ij})^2(t)}{\V_\param( y_i(t))}. 
\end{align}
Similar as above, this allows to derive an alternative unit-conserving sensitivity measure~${S}^\mathrm{T}_{ij}(t)$. Evaluating~\eqref{eq:Sobol_total_order} for a PCE of the output~$y_i(t)$ leads to the index set
\begin{align}
\mathscr{I}^\mathrm{T}_j = \left\{1\leq i \leq \ell \left| \frac{\partial \phi_i(\param)}{\partial \param_j} \neq 0 \right. \right\}.
\end{align}
\begin{remark}
By comparing the equation above with~\eqref{eq:IdexSet_first_order}, it follows that~$\mathscr{I}_j \subseteq \mathscr{I}^\mathrm{T}_j$ since also interaction terms are allowed in the total-order framing. Analogously than in~\eqref{eq:selection_first_order}, a selection vector~$v^\mathrm{T}_j\in\R^\ell$ makes it possible to compute a relation similar to~\eqref{eq:sensitivity_measure_first_order} for the sensitivity measures~$S^\mathrm{T}_{ij}(t)$ based on the PCE coefficients~$Y(t)$.
\end{remark}
\begin{figure}[tb]
	\centering
	\footnotesize
	\includegraphics[width=\linewidth]{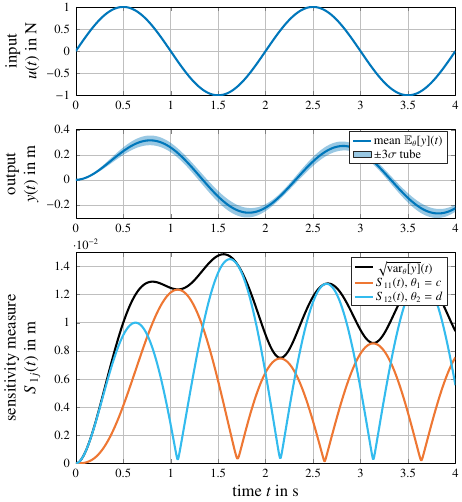}
	\mycaption{Sensitivities of the spring damper system}{The mean output and the overall uncertainty~(middle) for a sinusoidal input~(top) are computed by an intrusive surrogate. The contribution of the parameters to the variance are shown in the lower graph.}
	\label{fig:IntrusiveSensitivitiesSpringDamper}
\end{figure}
\noindent\textit{Summary}. Given a specific trajectory of the input~$u(t)$, the relevant properties of stochastic models~\eqref{eq:LPV_stochastic} can be evaluated by a simulation of the intrusive surrogate model. 
In particular, the simulation of~\eqref{eq:LTI_surrogate} with~$\nx\ell$ states. After that, the computation of the required sensitivities~$S_{ij}(t)$ is straightforward using~\eqref{eq:sensitivity_measure_first_order} for all $j=1,\hdots,\np$.
In addition, a numerical integration of the system~\eqref{eq:variance_surrogate} with $\nx^2$ states evaluated at $\bar\param$ gives the quantity~$S_{\min}(t)$ by virtue of~\eqref{eq:sensitivity_minimal_surrogate}.
Combining both by means of~\eqref{eq:sensitivity_effective} gives the effective sensitivities $\Delta S(t)$ required for the stochastic optimal control problem~\eqref{eq:SOCP_input}.
\subsection{Illustrative example for sensitivities}
\label{sec:intrusive:sens_example}
\noindent As an illustrative example how to compute the sensitivities for a given input signal, consider a spring-damper system governed by the dynamics
\begin{align}
	\label{eq:spring_damper_example}
d \dot{x}(t) &=  -c x(t) + u(t),~~~t>0,~~~x(0) = 0
\end{align}
with the input signal $u(t) = u_0\sin(2\pi f_0 t)$ being a given sinusoidal force with amplitude~$u_0=1.0\,\mathrm{N}$ and frequency~$f_0=0.5\,\mathrm{Hz}$.
In the equation above, $x(t)$ denotes the displacement, $c$ the spring constant and $d$ the damping constant.
Both parameters follow uniform distributions with $\np=2$ and
\begin{align*}
c&\sim \mathcal{U}(1.8\,\mathrm{N/m},\, 2.2\,\mathrm{N/m}), \\
d&\sim \mathcal{U}(0.9\,\mathrm{Ns/m},\, 1.1\,\mathrm{Ns/m}).
\end{align*}
In addition, the displacement~$x(t)$ is considered as the scalar output of the system with~$\ny=1$.
Using a PCE-order of~$\npce=3$, the resulting surrogate model~\eqref{eq:LTI_surrogate} has~$\ell\nx=\ell\ny=10$ states and outputs. 
To illustrate the impact of the disturbances, the example further includes a measurement noise with variance $\Sigma_\nu = 0.007\,\mathrm{m}^2$ and no process noise, i.e., $\Sigma_w = 0$.
\begin{figure}[tb]
	\centering
	\footnotesize
	\vspace{1.5ex}
	\includegraphics[width=\linewidth]{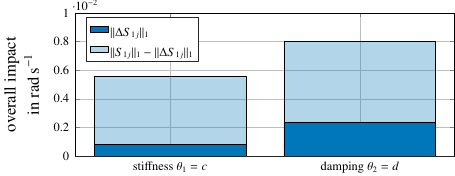}
	\mycaption{Overall sensitivity scores}{The total height indicates the total sensitivity score of the spring damper system, the dark blue part the effective one.}
	\label{fig:IntrusiveSensitivitiesRakingSpringDamper}
\end{figure}

The expected value of the system's displacement is visualized in Fig.~\ref{fig:IntrusiveSensitivitiesSpringDamper}~(middle). The shaded area indicates the overall uncertainty w.r.t. $\param$ given by the standard deviation computed from the conditional variance~\eqref{eq:PCE_variance_scalar}, which is shown as the black line in lower graph in Fig.~\ref{fig:IntrusiveSensitivitiesSpringDamper}. The resulting sensitivities are shown in Fig.~\ref{fig:IntrusiveSensitivitiesSpringDamper}~(bottom) and computed in first-order sense using~\eqref{eq:sensitivity_measure_first_order}. \revv{Peak sensitivities of the spring constant~$c$ occur at the extrema of the sinusoidal input, whereas the damping constant~$d$ has its highest impact during the zero-crossings of the input, where the velocity is maximal.}

To illustrate how to condense the sensitivities into a single score per parameter, the 1-Norm of the time-series condenses the sensitivity scores similar to the cost functional~\eqref{eq:cost_functional}. The results are visualized in Fig.~\ref{fig:IntrusiveSensitivitiesRakingSpringDamper}. The total height of each bar indicates the total sensitivity score~$\|S_{1j}\|_1$, whereas the dark blue part reflects the effective sensitivity~$\|\Delta S_{1j}\|_1$. As can be seen from~\eqref{eq:sensitivity_minimal_surrogate}, the minimal sensitivity~$S_{\min}$ is constant over time here, since the output mapping is deterministic $C(\param)=1$, and purely determined by the measurement noise. The spring constant~$c$ has a higher overall impact on the output uncertainty than the damping constant~$d$.

\section{\revv{The non-intrusive method for global sensitivity analysis}}
\label{sec:nonintrusive_sensi}

This section introduces an approach to non-intrusive global sensitivity analysis using \revv{ optimal transport, initially proposed by \cite{borgonovo2024global} for first-order sensitivity indices.}
This method is particularly useful when dealing with complex models where intrusive methods may not be feasible.
\revv{The section begins with a brief overview of optimal transport theory, followed by the formulation of first-order sensitivity measures as proposed by \cite{borgonovo2024global}. Finally, the optimal transport sensitivity measures are extended to the total-order case.}
\subsection{Introduction to optimal transport}

Given a non-negative lower semi-continuous function (transportation cost) $\kappa : \outputSupp \times \outputSupp \to \R_{\geq 0}$, the optimal transport discrepancy between two probability distributions $\mathbb{P}, \mathbb{Q} \in \mathcal{P}(\outputSupp)$ is defined as
\begin{align}
	D_{\kappa}(\mathbb{P}, \mathbb{Q}) \coloneqq \inf_{\gamma \in \Gamma(\mathbb{P}, \mathbb{Q})} \int_{\outputSupp \times \outputSupp} \kappa(y_1, y_2) d\gamma(y_1, y_2), \label{eq:OT:discrepancy}
\end{align}
where $\Gamma(\mathbb{P}, \mathbb{Q})$ is the set of all joint probability distributions over $\outputSupp \times \outputSupp$ with marginals $\mathbb{P}$ and $\mathbb{Q}$, also known as \textit{couplings} or \textit{transport plans} \cite{villani2008optimal}.

When $\kappa(y_1, y_2) = \lvert y_1 - y_2 \rvert^p$ for some $p \in [1, \infty]$, the optimal transport discrepancy \eqref{eq:OT:discrepancy} collapses to the celebrated type-$p$ Wasserstein distance
\begin{align}
	W_p(\mathbb{P}, \mathbb{Q}) \coloneqq \left(D_{\lvert y_1 - y_2 \rvert^p}(\mathbb{P}, \mathbb{Q}) \right)^{1/p}. \label{eq:OT:Wasserstein}
\end{align}
In general, closed-form expressions of \eqref{eq:OT:Wasserstein} are out of reach. 
However, one prominent exception is the type-2 Wasserstein distance, which according to \cite{gelbrich1990formula} can be expressed as 
\begin{align}
W^2_2(\mathbb{P}, \mathbb{Q}) \coloneqq W^2_\mathrm{B}(\mathbb{P}, \mathbb{Q}) + \Psi(\mathbb{P}, \mathbb{Q}). \label{eq:OT:gelbrich}
\end{align}
The first part is the so-called Bures metric \cite{bures1969extension}
\begin{align}
W^2_\mathrm{B}(\mathbb{P}, \mathbb{Q}) &\coloneqq \mathscr{M}^2(\mu_\mathbb{P}, \mu_\mathbb{Q}) + \mathscr{V}^2(\Sigma_\mathbb{P}, \Sigma_\mathbb{Q}) \label{eq:OT:Bures} 
\end{align}
which computes the deviation between the first two moments
\begin{align*}
\mathscr{M}^2(\mu_\mathbb{P}, \mu_\mathbb{Q}) & \coloneqq \lVert \mu_\mathbb{P} - \mu_\mathbb{Q} \rVert_2^2  \\
\mathscr{V}^2(\Sigma_\mathbb{P}, \Sigma_\mathbb{Q}) &  \coloneqq  \mathrm{tr}\left( \Sigma_\mathbb{P} + \Sigma_\mathbb{Q} - 2\left( \Sigma^{1/2}_\mathbb{Q} \Sigma_\mathbb{P} \Sigma^{1/2}_\mathbb{Q} \right)^{1/2} \right),
\end{align*}
while the second part denotes a non-negative residual term $\Psi(\mathbb{P}, \mathbb{Q}) \geq0$ that is positive for distributions with higher moments. In general it holds that $W^2_2(\mathbb{P}, \mathbb{Q}) \geq W^2_\mathrm{B}(\mathbb{P}, \mathbb{Q})$.
\begin{remark}
	Under the assumption that both, $\mathbb{P}$ and $\mathbb{Q}$ are elliptical distributions with the same density generator, Gelbrich~\cite{gelbrich1990formula} proved that $\Psi(\mathbb{P}, \mathbb{Q}) \equiv 0$.
	Intuitively, this follows from the fact that elliptical distributions are fully specified by their first two moments, i.e., the mean $\mu$ and the covariance $\Sigma$, in which case higher moments do not exist.
\end{remark}
The next step draws the connection between the optimal transport discrepancy and global sensitivities.
\subsection{First-order optimal transport sensitivities}
\label{subsec:first_order_wasser}
\revv{For simplicity, let the parameter vector $\theta$ be scalar, i.e., the parameter distribution $\dist{\theta}$ is univariate}~\footnote{In the multivariate case, it is typically assumed that the parameters $\theta_1, \ldots, \theta_q$ are mutually independent. Thus, the joint parameter distribution $\dist{\param}$ is defined as the product of its marginals $\dist{\param} = \prod_{i=1}^{q} \dist{\theta_i}$. Consequently, the analysis presented in this section can be repeated for each parameter $i = 1, \ldots, q$ independently.}.
The \textit{first-order global optimal transport sensitivity index} of $\param \sim
\dist{\param}$ with respect to $y \sim \dist{y}$ along \eqref{eq:OT:discrepancy} is defined as in \cite{borgonovo2024global}
\begin{align}
	\xi(\param,y) \coloneqq \E_{\param} \left( D_{\kappa}(\dist{y}, \dist{ y|\param }) \right). \label{eq:OT:sensitivity}
\end{align}
Assuming positivity of the transport cost $\kappa$ under two independent random vectors $y_1, y_2 \sim \dist{y}$, i.e, $\E_{y}(\kappa(y_1,y_2)) > 0$, a normalization of \eqref{eq:OT:sensitivity} follows as
\begin{align}
\iota(\param,y) \coloneqq \left(\E_{y}(\kappa(y_1,y_2)) \right)^{-1} \xi(\param,y). \label{eq:OT:normalized_sensitivity}
\end{align}
\begin{remark}
	The normalization $\E_{y}(\kappa(y_1,y_2))$ in \eqref{eq:OT:normalized_sensitivity} can be understood as a generalization of the variance-based normalization from classical sensitivity analysis methods.
\end{remark}
Next, consider the type-2 Wasserstein distance with corresponding global sensitivity index
\begin{align}
\xi_{W^2_2}(\param,y) &\coloneqq \E_{\param} \left( W^2_2(\dist{y}, \dist{ y|\param } ) \right) \nonumber\\
&\overset{\eqref{eq:OT:gelbrich}}{=} \E_{\param} \left( W^2_\mathrm{B}(\dist{y}, \dist{ y|\param }) \right) + \E_{\param} \left(\Psi(\dist{y}, \dist{ y|\param }) \right) \nonumber\\
&\:\geq \E_{\param} \left( W^2_\mathrm{B}(\dist{y}, \dist{ y|\param}) \right) = \xi_{W_\mathrm{B}^2}(\param,y), \label{eq:OT:Wasserstein_sensitivity} 
\end{align}
where the inequality holds as equality if both $\dist{y}$ and $\dist{y|\param}$ are elliptical. 
In view of the normalized index \eqref{eq:OT:normalized_sensitivity}, it amounts to substitute $\kappa(y_1, y_2) = \lvert y_1 - y_2 \rvert^2$ and $\xi_{W_\mathrm{B}^2}(\param,y)$, resulting in
\begin{align}
\iota_\mathrm{B}(\param,y) &\coloneqq (\underbrace{\E_{y}(|y_1-y_2|^2)}_{= 2\mathrm{tr}(\Sigma_{\dist{y}})} )^{-1} \xi_{W_\mathrm{B}^2}(\param,y). \label{eq:OT:normalized_sensitivity_wasserstein}
\end{align}

\begin{figure}[b]
	\centering
	\includegraphics{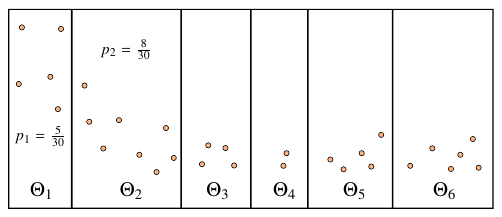}
	\mycaption{Exemplary scatter plot}{Scatter plot with $N_\s = 30$ samples $(\param_i, y_i)$ divided into $M = 6$ bins. $p_1$ and $p_2$ denote the frequencies of $x_i$ being located in bins $\paramSupp_1$ and $\paramSupp_2$, respectively.}
	\label{fig:scatter_plot}
\end{figure}

\begin{remark}
	\label{remark:type2_first_order}
The type-2 Wasserstein sensitivity can be related to variance-based sensitivities \cite{borgonovo2024global}, i.e., first order Sobol indices. By substituting the definition of $\xi_{W_\mathrm{B}^2}(x,y)$ along with \eqref{eq:OT:Bures} into \eqref{eq:OT:normalized_sensitivity_wasserstein}, one obtains the separation 
\begin{align*}
	\iota_\mathrm{B}(\param,y) &= \frac{\mathscr{M}^2( \mu_{\dist{y}}, \mu_{\dist{y|\param}}) + \mathscr{V}^2(\mu_{\dist{y}}, \mu_{\dist{y|\param}})}{2 \mathrm{tr}(\Sigma_{\dist{y}})},
\end{align*}
where the mean deviation (advective part) $\mathscr{M}^2( \mu_{\dist{y}}, \mu_{\dist{y|\param}})$ is connected to the first order Sobol' index as
\begin{align*}
\iota_S(\param,y) &= \frac{\mathscr{M}^2( \mu_{\dist{y}}, \mu_{\dist{y|\param}})}{\mathrm{tr}(\Sigma_{\dist{y}})}.
\end{align*}
\end{remark}
Note that each quantity \eqref{eq:OT:sensitivity} - \eqref{eq:OT:normalized_sensitivity_wasserstein} requires the true (but unknown) parameter and output distribution to be evaluated and thus, an approximation scheme is required.
\subsection{Bin-wise sample-average approximation}
\revv{From now on, the analysis assumes access to $N_\s \in \mathbb{N}$ parameter/output pairs $\{(\param_i, y_i)\}_{i=1}^{N_\s}$, e.g., obtained from simulation or real-world measurements.}
Thus, the empirical counterparts $\dist{\param}^{N_\s} = N^{-1}_\s \sum_{i=1}^{N_\s} \delta_{\param_i}$ and $\dist{y}^{N_\s} = N^{-1}_\s \sum_{i=1}^{N_\s} \delta_{y_i}$ replace the parameter and output distributions $\dist{\param}$ and $\dist{y}$. 
\begin{definition}
	Given an empirical distribution $\dist{y}^{N_\s}$, the sample mean and variance are defined as
	\begin{align}
	\label{eq:OT:sample_mean_variance}
	\hat{\mu}_{y} \coloneqq N^{-1}_\s \sum_{i=1}^{N_\s} y_i, \quad \hat{\Sigma}_{y} \coloneqq N_\s^{-1} \sum_{i=1}^{N_\s} (y_i - \hat{\mu}_{y})^2.
	\end{align}
	Consider a non-empty subset of the parameter space $\paramSupp_j \subset \paramSupp$ with cardinality $|\paramSupp_j| = N_j \geq 1$. The conditional sample mean and variance of $y_i$ given $\param_i \in \paramSupp_j$ are defined as
	\begin{align*}
	\hat{\mu}_{y|\paramSupp_j} \coloneqq \sum_{\param_i \in \paramSupp_j} \frac{y_i}{N_j}, \quad \hat{\Sigma}_{y|\paramSupp_j} \coloneqq \sum_{\param_i \in \paramSupp_j} \frac{(y_i - \hat{\mu}_{y|\paramSupp_j})^2}{N_j}.
	\end{align*}
\end{definition}
In order to estimate \eqref{eq:OT:Wasserstein_sensitivity} from data, a first step partitions the parameter space $\paramSupp$ into $M$ non-overlapping (and not necessarily uniform) subsets $\paramSupp_j$ for $j = 1, \ldots, M$, such that $\paramSupp = \bigcup_{j=1}^M \paramSupp_j$. 
To give an importance weighting to the bins, the following weights apply
\begin{figure}[tb]
	\centering
	\footnotesize
	\vspace{1.5ex}
	\includegraphics[width=\columnwidth]{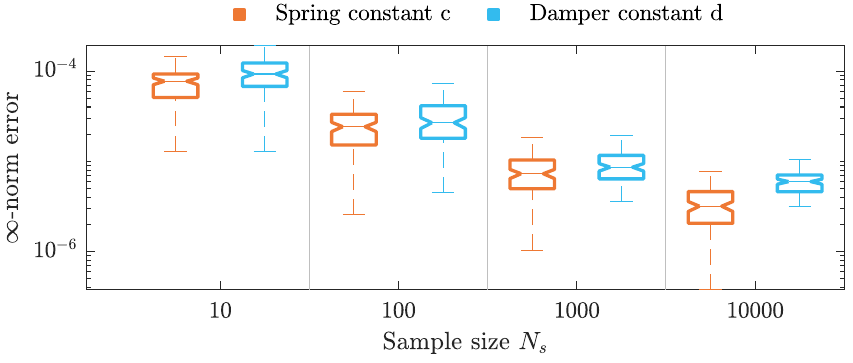}
\mycaption{Sampling error for different sample sizes}{Infinity norm error between the intrusive and non-intrusive method computed over $M=100$ experiments. The box plots illustrate the median, the 25th and 75th percentiles, while the whiskers denote the outliers.}
	\label{fig:SpringDamer:Norm_Error}
\end{figure}
\begin{align}
	p_j = | \paramSupp_j | / {N_\s} \quad j = 1, \ldots, M,
\end{align}
where $\sum_{j = 1}^M | \paramSupp_j | = {N_\s}$, see Figure~\ref{fig:scatter_plot} for an illustration. 
The bin-wise SSA of \eqref{eq:OT:Wasserstein_sensitivity} is then given by
{
\begin{align}
	 &\hat{\xi}_{W_\mathrm{B}^2}(\param,y) = \E_{\hat{\param}} \left( W^2_\mathrm{B}\left(\dist{y}^N, \dist{ y|\param }^N \right) \right) \nonumber\\
	&\approx \sum_{j=1}^{M} p_j \left( \mathscr{M}^2 \left( \hat{\mu}_{y}, \hat{\mu}_{y | \paramSupp_j} \right) + \mathscr{V}^2\left(\hat{\Sigma}_y, \hat{\Sigma}_{\dist{y| \paramSupp_j}} \right)\right), \label{eq:SSA}
\end{align}
}%
which, after normalization with the sample variance of $y$, results in an empirical approximation of the type-2 Wasserstein sensitivity index \eqref{eq:OT:normalized_sensitivity_wasserstein}, i.e.,
\begin{align}
	\hat{\iota}_\mathrm{B}(\param,y) = (2 \mathrm{tr}(\hat{\Sigma}_{y}))^{-1}\hat{\xi}_{W^2}(\param,y).
\end{align}
The first order Sobol' index can similarly be approximated as
\begin{align}
\label{eq:OT:normalized_approximated_sobol}
\hat{\iota}_S(\param,y) &= (\mathrm{tr}(\hat{\Sigma}_{y}))^{-1} \sum_{j=1}^{M} p_j \mathscr{M}^2 \left( \hat{\mu}_{y}, \hat{\mu}_{y | \paramSupp_j} \right).
\end{align}
\begin{remark}
	The intuition behind \eqref{eq:SSA} is the following: For each bin $\paramSupp_j$, the distance between the conditional distribution $\dist{ y|\paramSupp_j }^N$ and the marginal distribution $\dist{y}^N$ measures discrepancies in the output $y$ over the partitioned parameter space $\paramSupp$, see also Fig.~\ref{fig:scatter_plot}.
\end{remark}

\subsection{Example transport sensitivities}
\revv{The following revisits the spring-damper example introduced in Section~\ref{sec:intrusive:sens_example}.}
Unlike the intrusive method, the non-intrusive method relies on samples of the state trajectory to derive the Wasserstein sensitivities. 
To quantify the finite sample error between both methods, the analysis computes the maximum deviation between \eqref{eq:Sobol_first_order} and \eqref{eq:OT:normalized_approximated_sobol} along the trajectory $y(t)$ for different sample sizes $N_\s \in [10, 10^2, 10^3, 10^4]$, i.e.,
\begin{align*}
	e_i(j, N_{\mathrm{s}}) \coloneqq \max_t \left|\hat{\iota}_S(\param_j,y(t)) - \mathrm{SU}_{j}(y(t)) \right|, \quad i = 1, \ldots, M,
\end{align*}
and repeats the experiment for $M = 100$ times to enable statistical inferences. The results are visualized in Figure~\ref{fig:SpringDamer:Norm_Error}, illustrating the finite sample error between the intrusive and non-intrusive method for both the spring and damper constants.
As expected, increasing the sample size $N_\mathrm{s}$ improves the reliability of the Wasserstein sensitivity estimate. 
\revv{Consequently, the estimation error converges to zero as $N_\mathrm{s} \rightarrow \infty$, an observation that is attributable to the SAA in \eqref{eq:SSA}, see \cite[Thm. 21]{borgonovo2024global}.}
\subsection{Total-order optimal transport sensitivities}
Let $\param = (\param_1, \param_2, \ldots, \param_{q})$ be a $q$-dimensional random vector defined on the joint distribution $\dist{\param}$.
The conditional probability distribution of the output $y$ given $\param$ without $\param_i$ is denoted as $\dist{y|\sim \param_i}$.
To analyze the total-effect sensitivity of $y$ with respect to the parameter $\param_i$, the variance from all interactions of any order with any other parameter $\param_j$ for $j \neq i$ is taken into account. 
The idea was initially proposed by \cite{homma1996importance} using Euclidean distances, but is now extended using the optimal transport discrepancy \eqref{eq:OT:discrepancy}, i.e.
\begin{align}
	\xi^{\mathrm{T}}(\param_i, y) \coloneqq 1 - \mathbb{E}_{\mathbb{P}_{\sim \param_i}} \left( D_{\kappa}\left(\dist{y}, \dist{ y| \sim \param_i }\right) \right). \label{eq:OT:Wasserstein_sens_index}
\end{align}
Similar to Section \ref{subsec:first_order_wasser}, the type-2 Wasserstein distance follows by setting the transport cost to $\kappa(y_1, y_2) = |y_1 - y_2|^2$. Then, following the lines of \eqref{eq:OT:Wasserstein_sensitivity}, the total-order type-2 Wasserstein sensitivity index is defined as
\begin{align}
	\iota^{\mathrm{T}}_{\mathrm{B}}(\theta_i, y) \coloneqq 1 - \left(2\mathrm{tr}\left(\Sigma_{\dist{y}}\right) \right)^{-1} \xi^{\mathrm{T}}_{W^2_\mathrm{B}}(\param_i, y). \label{eq:OT:normalized_total_sensitivity_wasserstein}
\end{align} 
\begin{figure}[t]
	\centering
	\footnotesize
	\includegraphics{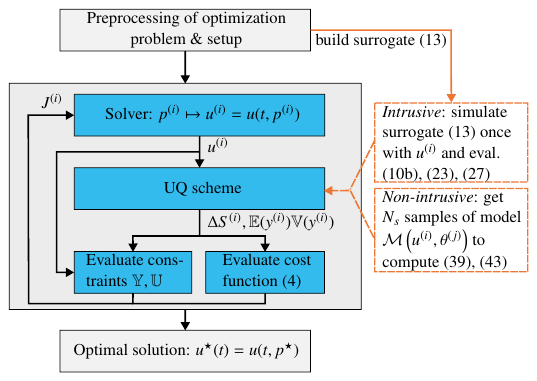}
	\caption{\raggedright Steps to solve the optimization problem~\eqref{eq:SOCP_discretized}.}
	\label{fig:algorithm}
\end{figure}
\begin{remark}
	Similar to Remark \ref{remark:type2_first_order}, the total-order type-2 Wasserstein sensitivity index can be related to its variance-based counterpart. 
	To this end, substitution of \eqref{eq:OT:Bures} into \eqref{eq:OT:normalized_total_sensitivity_wasserstein} gives
	\begin{align*}
		\iota^{\mathrm{T}}_\mathrm{B}(\param_i, y) &= 1 - \frac{\mathscr{M}^2( \mu_{\dist{y}}, \mu_{\dist{y|\sim \param_i}}) + \mathscr{V}^2(\mu_{\dist{y}}, \mu_{\dist{y|\sim \param_i}})}{2 \mathrm{tr}(\Sigma_{\dist{y}})},
	\end{align*}
	where the advective part $\mathscr{M}^2( \mu_{\dist{y}}, \mu_{\dist{y|\sim \param}})$ is connected to the total order Sobol' index as
	\begin{align}
		\iota^{\mathrm{T}}_S(\param, y) &= 1-\frac{\mathscr{M}^2( \mu_{\dist{y}}, \mu_{\dist{y|\sim \param_i}})}{\mathrm{tr}(\Sigma_{\dist{y}})}. \label{eq:OT:sobol_total_order}
	\end{align}
\end{remark}
\begin{remark}
	The total order sensitivity index \eqref{eq:OT:sobol_total_order} can be computed analogously to the first order index by gridding the parameter space. However, the total-order index requires a $q$-dimensional grid, whereas the first order index requires a $1$-dimensional grid. Therefore, the computation time to evaluate the total-order index increases exponentially in the parameter dimension $q$. 
\end{remark}
\section{The optimization method}
\label{sec:optimization_method}

\revv{While the two previous sections focused on the computation of sensitivities for a given input signal, the focus now returns to the optimization of the input signal itself.}
\revv{As stated in Section~\ref{ssec:OptimalExcitationProblem}, the optimal excitation is considered as the solution of the infinite-dimension stochastic optimal control problem~\eqref{eq:SOCP_input}.}
With the results from Sections~\ref{sec:intrusive_sensi} and~\ref{sec:nonintrusive_sensi}, the cost functional~\eqref{eq:cost_functional} can now be evaluated for a given input signal. 

\revv{However, problem~\eqref{eq:SOCP_input} is still challenging since the optimization is performed over an infinite-dimensional space of input functions.} \revv{This aspect is addressed in Section~\ref{ssec:Input}.} \revv{Afterwards, Section~\ref{ssec:OptProblem} discusses suitable optimization algorithms to solve the resulting finite-dimensional optimization problem, and Section~\ref{sec:IllustrativeExampleOID} shows exemplary results for the spring-damper system.}
\subsection{Parameterization of the input}
\label{ssec:Input}
\begin{figure*}
	\centering
	\footnotesize
	\vspace{2.0ex}
	\includegraphics[width=\linewidth]{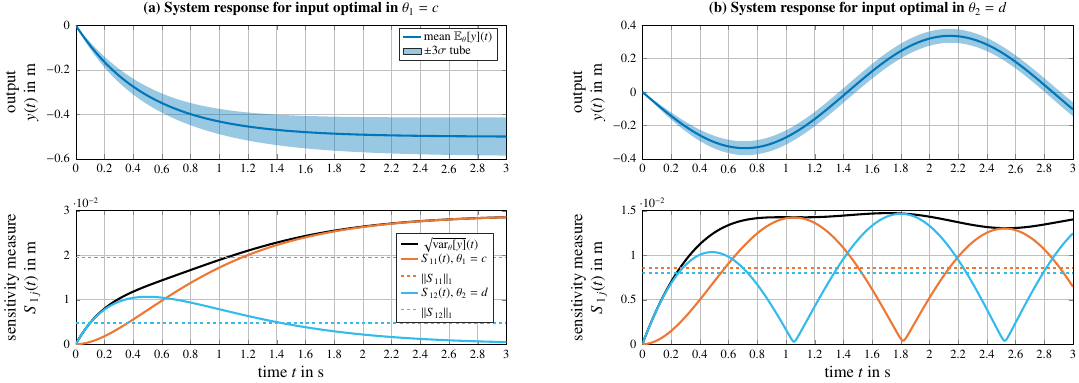}
	\vspace{-2.5ex}
	\mycaption{Spring damper system with optimal inputs}{\revv{Response of the spring-damper example under optimal inputs tailored for the identification of (a) the spring constant~$c$ and (b) the damper constant~$d$.} The mean output and the overall uncertainty are shown in the upper plots while the sensitivities of the parameters can be seen in the lower plots.}
	\label{fig:optInputSpringDamperSys}
\end{figure*}
Finding numerical solutions to optimal control problems typically involves the discretization of the input signal \cite{betts1998survey,liberzon2011calculus}. 
A common approach to parameterize an input signal~$u(\dof,t)$ with finite degrees of freedom~$\dof$ is using the superposition of basis functions $\rho_i(t,\dof)$. \revv{In general, this parameterization is nonlinear in~$\dof$ when the basis functions themselves depend on~$\dof$.} \revv{Fixed basis functions are a special case, where the parameterization reduces to a linear combination $u(t,\dof)=\dof^\T \rho(t)$.} For example Fourier methods, where the frequencies and amplitudes of sinusoidal basis functions are optimized, fall under this category and are widely used in optimal input design \cite{gevers2005optimal,ljung1999system}. 
Note that the direct discretization of the signal at certain time instances is a special case of this approach, where the basis functions are linear interpolants, splines or piecewise-constant functions~\cite{betts1998survey}.
An important requirement for the choice of basis functions is the ability to verify input constraints $u(t,\dof)\in\mathbb{U}$, e.g., amplitude or rate limits, for all times $t\in[0,t_\f]$ -- not solely at the certain points in time. Here the use of Bernstein polynomials is advantageous, since the constraints can be verified by checking the coefficients only \cite{farouki2012bernstein}.

For the sake of generality, the remainder retains the general notation $u(t,\dof)$, leading to the optimization problem for the degrees of freedom~$\dof\in\R^N$
\begin{align}
\label{eq:SOCP_discretized}
 &\max_{\dof\in\R^N}&  &J\big(\Delta S(t),u(t,\dof))&\qquad&\\
 &~\mathrm{s.\,t.}&  & y(t) = \model(u(t,\dof),\param),&\text{~for all } t\in[0,t_\f],\nonumber\\
 &&                 & y(t) \in \mathbb{Y}, ~~u(t,\dof) \in \mathbb{U},&\text{~for all } t\in[0,t_\f]\hphantom{,} \nonumber
\end{align}
\
being a finite-dimensional approximation of the original problem~\eqref{eq:SOCP_input}.
\subsection{Optimization algorithm}
\label{ssec:OptProblem}

To compute a solution of the optimization problem~\eqref{eq:SOCP_discretized}, suitable optimization algorithms are required. Since problem~\eqref{eq:SOCP_discretized} is typically non-convex and possibly non-smooth, gradient-free optimization algorithms such as evolutionary strategies~\cite{storn1997differential} or Bayesian optimization~\cite{snoek2012practical} are reasonable choices. 
For the examples presented in this work, the \texttt{scipy.optimize} implementation of the differential evolution algorithm~\cite{storn1997differential} from the SciPy library~\cite{2020SciPy-NMeth} solves~\eqref{eq:SOCP_discretized}.

The overall steps to solve the optimization problem~\eqref{eq:SOCP_discretized} are summarized in Fig.~\ref{fig:algorithm}. 
Once the objective, constraints, and uncertain parameters have been defined, the input signal is then parameterized in terms of a set of basis functions.
If the intrusive method is used to compute the sensitivities, the surrogate model is also derived in this first step.
Then, the optimization algorithm iteratively proposes new input parameters~$\dof^{(i)}$, which are used to evaluate the input signal~$u(t,\dof^{(i)})$. With the current input signal, the sensitivity measures and statistics of the outputs are computed using either the intrusive or non-intrusive method. The latter case requires a sampling algorithm and more function evaluations but less preprocessing effort. In the intrusive case, the computation of the sensitivity solely requires a single evaluation of the surrogate model with the current input~$u(t,\dof^{(i)})$.
Based on this information, the cost and the constraints of the optimization problem are evaluated and given back to the optimization algorithm. This process is repeated until a convergence criterion is met.
\subsection{Illustrative example: optimal input design}
\label{sec:IllustrativeExampleOID}
In the previous sections, two different methods were inspected to compute the sensitivities for the spring-damper system~\eqref{eq:spring_damper_example} with a fixed input signal.
The subsequent paragraphs demonstrate the use of the optimization method to find optimal sinusoidal input signals for the individual uncertain parameters~$c$ and~$d$.
To this end, \revv{the same model parameters as stated in Section~\ref{sec:intrusive:sens_example} apply} and the input set is defined as
\begin{align}
	\label{eq:input_set_spring_damper}
\mathbb{U}_1 =\{ &u(t) = u_0 \sin(2\pi f t - \varphi)~|~u_0 \in [0.0\,\mathrm{N},\,1.0\,\mathrm{N}],\nonumber\\
&f \in [0.0\,\mathrm{Hz},\,5.0\,\mathrm{Hz}],\, \varphi \in [0,2\pi] \},
\end{align}
which comes with the degrees of freedom $\dof^\T=[u_0,\,f,\,\varphi]$
while the output remains unconstrained. For the cost functional \eqref{eq:cost_functional}, the two pairs of weighting matrices read
\begin{table}[b]
	\caption{\raggedright Optimal parameters of the input signal given by~\eqref{eq:input_set_spring_damper}.}
	\vspace{-2.0ex}
	\label{tab:optimalsignals}
	\begin{center}	
	\begin{tabular}{cccc}
	\hline
	optimization task & $u_0$ in $\mathrm{N}$ & $f$ in $\mathrm{Hz}$ & $\varphi$  \\\hline
	(a):\hspace{2ex} $\param_1 = c$ & 1.000 & 0.002 & 1.590\\
	(b):\hspace{2ex} $\param_2 = d$ & 1.000 & 0.350 & 2.310\\ 
	\hline
	\end{tabular}
	\end{center}
\end{table}
\begin{align*}
Q_\mathrm{a}=\begin{bmatrix}
	1 & 0 \\ 0 & 0 \\
	\end{bmatrix},~~~
	~~Q_\mathrm{b}=\begin{bmatrix}
	0 & 0\\ 0 & 1
	\end{bmatrix},~~~ R_\mathrm{a}=R_\mathrm{b}=0
\end{align*}
\begin{figure}[b]
\centering
\footnotesize
\vspace{-1ex}
\includegraphics{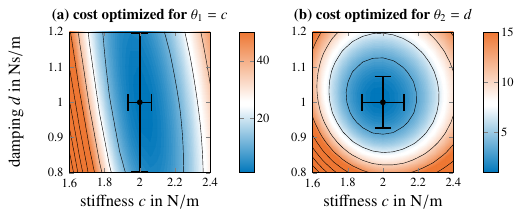}
\mycaption{Shape of the cost function for identification and resulting estimates}{Input optimized for (a) the spring constant~$c$ and (b) the damper constant~$d$. \rev{The 3$\sigma$-confidence intervals of the resulting parameter estimates $\hat c$ and  $\hat d$ are indicated by the black bars.}}
\label{fig:optCostSpringDamperSys}
\end{figure}
to maximize the sensitivity either w.r.t. the spring coefficient~$\theta_1=c$ or the damper constant~$\theta_2=d$, respectively. The optimization algorithm from Section~\ref{ssec:OptProblem} is then used to solve the two optimization problems~\eqref{eq:SOCP_discretized}. To this end the differential evolution algorithm is configured with a population size of~$N_ \mathrm{pop}=20$ and terminated after 24~(task a) and 28~(task b) iterations. A gradient-based optimization is done after the differential evolution to refine the solution.

With both methods for sensitivity analysis presented in Sections~\ref{sec:intrusive_sensi} and~\ref{sec:nonintrusive_sensi}, the same results follow but with different computation times (i.e. function evaluations). 
Since the algorithm uses $N_\mathrm{pop}\cdot\mathrm{dim}(\dof)=60$ function evaluations per iteration, the non-intrusive method with sample size $N_\s=100 $ needs $N_\s N_\mathrm{pop}\cdot\mathrm{dim}(\dof)=6\cdot 10^3$ simulations of the original model~\eqref{eq:spring_damper_example}.
Contrarily the intrusive procedure requires solely a single simulation of the surrogate per evaluation of the cost function. This corresponds to $N_\mathrm{pop}\cdot\mathrm{dim}(\dof)=60$~\revv{per iteration}.

Tab.~\ref{tab:optimalsignals} lists the resulting optimal input signals. The resulting outputs for both optimization tasks are shown in Fig.~\ref{fig:optInputSpringDamperSys} (upper graphs) along with corresponding sensitivity time series (lower graphs). 

The optimal input signal for the spring constant~$c$ tends to a step with the maximum amplitude. \revv{This result is expected since the spring constant is very dominant in the steady-state solution $\bar x = \bar u / c$ of the system.} \revv{The lower plot in Fig.~\ref{fig:optInputSpringDamperSys}(a) confirms that the damping has little influence in steady state.}
On the other hand, the optimal input signal for the damper constant~$d$ is shown in Fig.~\ref{fig:optInputSpringDamperSys}(b). Here, the optimal solution trades off the increasing damping for higher frequencies and the higher impact of the damper in transient states. However, the optimal sensitivities for the damping are significantly smaller compared to the spring constant case.

To illustrate the benefits in the parameter identification process using the optimized input signals, this illustration synthetically generates experimental data with a known ground truth of the parameters, $c^\star = 2.0\,\mathrm{N/m}$ and $d^\star= 1\,\,\mathrm{Ns/m}$. 
The experimental data is generated synthetically by adding measurement noise to the predicted data.
The resulting shape of the cost function is shown in Fig.~\ref{fig:optCostSpringDamperSys} for both optimization tasks.
\revv{In accordance with the optimal sensitivities, the cost optimized for the spring constant~$c$ shows a significantly steeper descent towards the ground truth compared to the cost optimized for the damper constant~$d$.}

\rev{\revv{The benefit of the optimal excitation is reflected in the confidence intervals of the resulting parameter estimates, which are shown in black in Fig.~\ref{fig:optCostSpringDamperSys}.} \revv{Using an unbounded least-squares estimate~\cite{ljung1999system} for both parameters simultaneously, the mean values are consistent with the ground truth for both optimization tasks with relative deviations of $0.13\%$ for task~(a) and $0.16\%$ for task~(b). As residual, the squared error between the predicted output and the synthetically generated data with a sample time of~$0.05\,\mathrm{s}$ is used. The standard deviations and the resulting confidence intervals are computed from a Bayesian posterior approximation~\cite{bishop2006} and read as}}
\rev{
\begin{align*}
&\mathrm{task~(a):}&  \sigma_\mathrm{a}(\hat c ) &= 0.0220\,\mathrm{N/m},& \sigma_\mathrm{a}(\hat d) &= 0.0655\,\mathrm{Ns/m}& \\
&\mathrm{task~(b):}&  \sigma_\mathrm{b}(\hat c) &= 0.0397\,\mathrm{N/m},& \sigma_\mathrm{b}	(\hat d) &= 0.0246\,\mathrm{Ns/m}.&
\end{align*}
}
\rev{\revv{As the expected information of the damping constant~$d$ is very low for task (a), the standard deviation of the parameter estimate remains high.} \revv{This observation is in line with the optimal sensitivities shown in Fig.~\ref{fig:optInputSpringDamperSys}(a) and the shape of the cost function in Fig.~\ref{fig:optCostSpringDamperSys}.} \revv{Combining the data from both individual optimization tasks significantly reduces the uncertainty of both parameter estimates, i.e. $\sigma_{\mathrm{a}\cup \mathrm{b}}(\hat c) = 0.0225\,\mathrm{N/m}$ and $\sigma_{\mathrm{a}\cup \mathrm{b}}(\hat d) = 0.0266\,\mathrm{Ns/m}$.}
}
\section{Industry application: optimal input design for\\ vehicle systems}
\label{sec:IndustryExample}
As an industry example, this section considers a vehicle dynamics model commonly used in automotive applications for control and simulation.
The model captures the vehicle's lateral dynamics using a single-track representation, which simplifies its behavior yet provides a sufficient approximation for a wide range of driving scenarios.
The setting is visualized in Figure~\ref{fig:IllustrationSTM} showing the most important vehicle states.
Single-track models form the basis for various automotive motion control applications such as lane-keeping assistance~\cite{falcone2007predictive}, electronic stability control~\cite{van2000bosch}, or automated driving systems~\cite{snider2009automatic}.
The objective is to determine an optimal steering angle trajectory~$u$ that maximizes the identifiability of the unknown parameters $\theta$ from the measured output~$y$.
In this example, the yaw rate serves as output while the inertia around the vertical axis shall be identified.

Within this chapter, the vehicle model is first introduced in Section~\ref{ssec:VehicleModeling}, and afterwards the optimization method is applied to find optimal steering trajectories in Section~\ref{ssec:OIDVehicle}. 
\rev{Finally, the ultimate benefit is illustrated for a real-world by comparing the parameter estimates obtained with the optimal input against standard input signals in Section~\ref{ssec:ResultsVehicle} in a real-world setting.}
\subsection{Vehicle System Modeling}
\label{ssec:VehicleModeling}
A nonlinear model serves to compare the intrusive approach from Section~\ref{sec:intrusive_sensi} against the non-intrusive approach from Section~\ref{sec:nonintrusive_sensi}. For the latter, the model is \revv{compiled} as a black box, \revv{and further used without applying any knowledge about its underlying mathematical structure.}
Afterwards, a linearized version of the model serves to compute an intrusive surrogate model.\\[1ex]
\noindent\textit{Nonlinear vehicle model}. The dynamics of the yaw rate~$\dot{\psi}$, the side slip angle~$\beta$, and the longitudinal speed~$v$ are given by
\begin{subequations}
	\label{eq:vehicle_dynamics_nonlinear}
\begin{align}
\ddot{\psi} &= \frac{1}{J_\mathrm{z}} \left( l_\mathrm{f} F_\mathrm{y,f} \cos\,\delta - l_\mathrm{r} F_\mathrm{y,r} \right)\\
\dot{\beta} &= \frac{1}{m v} \left( F_\mathrm{y,f} \cos\,\delta + F_\mathrm{y,r} \right) - \dot{\psi} \\
\dot{v} &= \frac{1}{m} \left( F_\mathrm{x} \cos\,\beta + F_\mathrm{y,f} \sin(\delta - \beta) + F_{y,r} \sin\,\beta \right),
\end{align}
\end{subequations}
where~$m$ is the vehicle mass, $J_\mathrm{z}$ is the yaw moment of inertia,~$l_\mathrm{f}$ and~$l_\mathrm{r}$ are the distances from the center of gravity to the front axle (index $\mathrm{f}$) and the rear one (index $\mathrm{r}$). Since the focus of this investigation is on the lateral dynamics, no external accelerating force is assumed, i.e.~$F_\mathrm{x} = 0$. 
The actual lateral tire forces~$F_\mathrm{y,f}$ and~$F_\mathrm{y,r}$ tend to their stationary levels by first-order dynamics. The latter ones are given by Pacejka's simplified tire model~\cite{Pajecka1987tire} 
\begin{subequations}
	\label{eq:tire_forces}
\begin{align}
	T_\mathrm{f}\dot{F}_\mathrm{y,f}=F_\mathrm{y,f}-\frac{\mu_{\mathrm{f}}\, C_{\mathrm{f}}\, m\,g\,l_{\mathrm{r}}}{l_{\mathrm{f}} + l_{\mathrm{r}}}\, \arctan(B_{\mathrm{f}}\, \alpha_{\mathrm{f}})\\
	T_\mathrm{r}\dot{F}_\mathrm{y,r}=F_\mathrm{y,r}-\frac{\mu_{\mathrm{r}}\, C_{\mathrm{r}}\, m\,g\,l_{\mathrm{f}}}{l_{\mathrm{f}} + l_{\mathrm{r}}}\, \arctan(B_{\mathrm{r}}\, \alpha_{\mathrm{r}}),
\end{align}
\end{subequations}
both being functions of the slip angles $\alpha_\mathrm{f}$ and $\alpha_\mathrm{r}$, respectively. These are defined as the difference between the steering angle $\delta$ and the actual slip angle of the front and rear tires and given by
\begin{subequations}
	\label{eq:slip_angles}
\begin{align}
\alpha_{\mathrm{f}} &= \delta - \arctan\left( \frac{l_{\mathrm{f}} \dot{\psi} + v \sin\beta}{v \cos\beta} \right) \\
\alpha_{\mathrm{r}} &= -\arctan\left( \frac{l_{\mathrm{r}} \dot{\psi} - v \sin\beta}{v \cos\beta} \right).
\end{align}
\end{subequations}
In order to acccount for the fact, that a desired steering angle~$u$ with $\nu=1$ is not immediately applied to the vehicle, the second-order lag element
\begin{figure}[t]
	\centering
	\footnotesize
	\includegraphics[width=\linewidth]{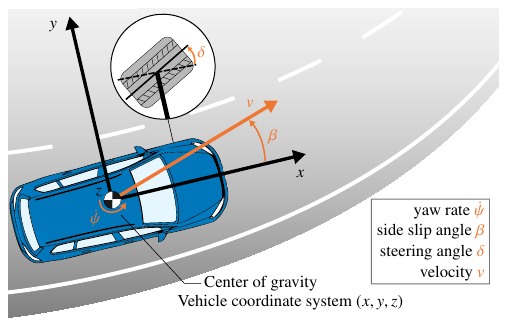}
	\caption{\raggedright Illustration of the lateral vehicle dynamics.}
	\label{fig:IllustrationSTM}
\end{figure}
\begin{align}
\label{eq:steering_dynamics}
\ddot{\delta} + 2 d_\mathrm{s} \omega_\mathrm{s} \dot \delta + \omega_\mathrm{s}^2 \delta = \omega_\mathrm{s}^2 u
\end{align}
with uncertain natural frequency~$\omega_\mathrm{s}$ and damping ratio~$d_\mathrm{s}$ is used to model the steering dynamics. 
In total, the nonlinear vehicle model is given by~\eqref{eq:vehicle_dynamics_nonlinear}--\eqref{eq:steering_dynamics}, with the state $x=[\dot\psi,\,\beta,\,v,\,{F}_\mathrm{y,f},\,{F}_\mathrm{y,r},\delta,\,\dot\delta]$. 
In this example, the yaw rate $\dot\psi$ is considered as scalar output~$y$, i.e. $\nu=1$.
The initial conditions are set to zero, except for the longitudinal speed, which is initialized with $v_0 = 13.89\,\mathrm{m/s}$. 
The remaining vehicle parameters are assumed to be deterministic and listed in Table~\ref{tab:vehicle_parameters}.

The prior knowledge of the inertia is reflected by the normal distribution $J_\mathrm{z} \sim \N(6000\,\mathrm{kg\,m}^2,1000\,\mathrm{kg\,m}^2)$.
As additional uncertain parameters, the tire stiffnesses~$B_\mathrm{f} \sim\N(10,\,1.0)$ and  $B_\mathrm{r} \sim\N(10,\,1.0)$ from~\eqref{eq:tire_forces} enter the model, as well as the parameters of the steering dynamics~\eqref{eq:steering_dynamics}, i.e.~$\omega_\mathrm{s} \sim\N(17\,\mathrm{\tfrac{rad}{s}},\,4.0\,\mathrm{\tfrac{rad}{s}})$ and the damping ratio~$d_\mathrm{s} \sim\N(0.75,\,0.05)$. In summary, a total of $\np=5$ uncertain parameters $\param = [J_\mathrm{z}, B_\mathrm{f}, B_\mathrm{r}, \omega_\mathrm{s}, d_\mathrm{s}]^\top$ result.\\[1ex]
\noindent\textit{Linear single track model}. 
Several typical assumptions yield a linearized single track model. These include small angle approximations, neglecting the tire dynamics~\eqref{eq:tire_forces}, and assuming the lateral tire forces~${F}_\mathrm{y,(\cdot)}$ are affine functions of their respective slip angles~$\alpha_{(\cdot)}$.
The nonlinear relations~\eqref{eq:vehicle_dynamics_nonlinear}--\eqref{eq:slip_angles} simplify to the linear ODEs
\begin{subequations}
	\label{eq:vehicle_dynamics_linear}
\begin{align}
\ddot{\psi} &= \frac{-c_\mathrm{f} l_\mathrm{f}^2 - c_\mathrm{r} l_\mathrm{r}^2}{J_z} \dot{\psi}
+ \frac{ c_\mathrm{r} l_\mathrm{r}-c_\mathrm{f} l_\mathrm{f} }{J_z} \beta + \frac{c_\mathrm{f} l_\mathrm{f}}{J_z}  \, \delta \\
\dot{\beta} &= \left(\frac{-c_\mathrm{f} l_\mathrm{f} + c_\mathrm{r} l_\mathrm{r} }{m v^2} -1 \right) \dot{\psi}
- \frac{c_\mathrm{f} + c_\mathrm{r} }{m v} \beta
+ \frac{c_\mathrm{f}}{m v}  \, \delta.
\end{align}
\end{subequations}
In addition, the vehicle speed~$v=v_0$ becomes a constant parameter instead of a state. 
To consider the neglected dynamics, the speed is also treated as an uncertain parameter with $v \sim\N(11.27\,\mathrm{m/s},\,0.87\,\mathrm{m/s})$.
The cornering stiffnesses~$c_{(\cdot)}$ at the respective axle are given by
\begin{subequations}
	\label{eq:cornering_stiffnesses}
\begin{align}
c_\mathrm{f} &= K_\mathrm{f}  \frac{ m g l_\mathrm{r}}{l_\mathrm{f} + l_\mathrm{r}},&&K_\mathrm{f} \sim\N\left(9.53\,\mathrm{\frac{s}{m}},\, 1.20\,\mathrm{\frac{s}{m}}\right)& \\
c_\mathrm{r} &= K_\mathrm{r} \frac{ m g l_\mathrm{f}}{l_\mathrm{f} + l_\mathrm{r}},&&K_\mathrm{r} \sim\N\left(18.8\,\mathrm{\frac{s}{m}},\, 2.00\,\mathrm{\frac{s}{m}}\right).&
\end{align}
\end{subequations}
To consider the neglected nonlinearities, the standard deviation of the combined stiffnesses $K_{(\cdot)} = \mu_{(\cdot)} C_{(\cdot)} B_{(\cdot)}$ are overapproximated as stated above. 
This results in $\np = 6$ uncertain parameters~$\param = [J_\mathrm{z}, K_\mathrm{f}, K_\mathrm{r}, \omega_\mathrm{s}, d_\mathrm{s}, v]^\top$ and a $\nx = 4$ dimensional state vector $x=[\dot\psi,\,\beta,\,\delta,\,\dot\delta]^\top$ for the linearized model. 
In accordance with the nonlinear model, the initial condition vanishes.
In addition, it is straightforward to verify that the linear model featuring the ODEs~\eqref{eq:vehicle_dynamics_linear} and~\eqref{eq:steering_dynamics} complies with the LPV-model structure defined in~\eqref{eq:LPV_stochastic} for the intrusive method from Section~\ref{sec:intrusive_sensi}.
\begin{table}[b]
	\caption{\raggedright Vehicle Parameters of the exemplary vehicle model representing a middle class car.}
	\label{tab:vehicle_parameters}
	\centering
	\begin{tabular}{llll}
		\hline
		\textbf{Parameter description} & \textbf{Symbol} & \textbf{Unit} & \textbf{Value} \\
		\hline
		Initial vehicle speed & $v$ & $\mathrm{m}/\mathrm{s}$ & 13.89 \\
		Vehicle mass & $m$ & $10^3\,\mathrm{kg}$ & 2.700 \\
		Distance CoG to front axle & $l_\mathrm{f}$ & $\mathrm{m}$ & 1.548 \\
		Distance CoG to rear axle & $l_\mathrm{r}$ & $\mathrm{m}$ & 1.441 \\
		Front tire friction coefficient & $\mu_\mathrm{f}$ & -- & 1.000 \\
		Rear tire friction coefficient & $\mu_\mathrm{r}$ & -- & 1.000 \\
		Front tire stiffness & $C_\mathrm{f}$ & $\mathrm{N}/\mathrm{rad}$ & 0.953 \\
		Rear tire stiffness & $C_\mathrm{r}$ & $\mathrm{N}/\mathrm{rad}$ & 1.878 \\
		Gravity constant & $g$ & $\mathrm{m}/\mathrm{s}^2$ & 9.810 \\
		Front tire time constant & $T_\mathrm{f}$ & $\mathrm{ms}$ & 28.57 \\
		Rear tire time constant & $T_\mathrm{r}$ & $\mathrm{ms}$ & 28.57 \\
		\hline
	\end{tabular}
\end{table}
\subsection{Optimization of the vehicle's excitation}
\label{ssec:OIDVehicle}
The objective is to find an optimal steering angle trajectory~$u$ that maximizes the sensitivity of the yaw rate~$\dot\psi$ with respect to the inertia~$J_\mathrm{z}$. In comparison to the sinusiodal input space from Section~\ref{sec:IllustrativeExampleOID}, this example considers more advanced input signals that comprise the superposition of multiple ramp signals. An individual ramp is parameterized with $\dof = [u_0,~u_T,~t_T,~t_\Delta]^\T$ and is defined as
\begin{align}
\label{eq:ramp_input}
\rho(t,p)=\mathrm{sat}_{[u_0,\,u_T]}\left(u_0 + \tfrac{t - (t_T - t_\Delta)}{t_\Delta} (u_T - u_0)\right),
\end{align}
where $u_0$ and $u_T$ denote the start and end values, $t_T$ the final time and $t_\Delta\leq t_T$ the rise time.
In addition, the saturation function $\mathrm{sat}_{[a,b]}(x) = \max\{  \min(a,b),\, \min\{x, \max(a,b)\}\}$ is used in the equation above.

The space~$\mathbb{U}_2$ of possible excitation functions is then defined as the set of all possible superpositions of $N$ ramp signals, i.e.,
\begin{align}
	\label{eq:input_set_vehicle}
\mathbb{U}_2 &= \bigg\{ u(t) = \sum_{i=1}^{N} \rho(t,p^i)~\bigg|~u_0^i =0,~t^i_\Delta\leq t^i_T,
~t_T^i\in(0,T], \nonumber\\
& ~u(0)=u(T)=0, ~|u(t)| < u^+, |\dot{u}(t)| < \dot{u}^+, \forall t \in [0, T] \bigg\},
\end{align}
with the time horizon $T=10.0\,\mathrm{s}$, the maximum amplitude $u^+ = 0.14\,\mathrm{rad}$ and a rate limit of~$\dot{u}^+ = 0.157\,\mathrm{\tfrac{rad}{s}}$. 
The choice $N=4$ yields $p=[p^1, \ldots, p^N]^\T\in\R^{16}$ total degrees of freedom. 
Similar to the previous example, the output~$y$ remains unconstrained. 
In addition, process and measurement noise vanishes according to the model representation. 
This implies $S_{\min} (t)=0$ and leads to the effective sensitivities $\Delta S(t)=S(t)$, see~\eqref{eq:sensitivity_effective}.
The differential evolution algorithm is configured with a population size of $N_\mathrm{pop}=30$ and terminated after 250 iterations. 
For the non-intrusive approach, the sample size~$N_\s=50$ is used.\\[1ex] 
\noindent\textit{Setup and computationally complexity}. Due to the structure of the nonlinear model~\eqref{eq:vehicle_dynamics_nonlinear}--\eqref{eq:steering_dynamics}, the intrusive method with the presented LPV-structure is not applicable. 
Therefore, the optimization problem~\eqref{eq:SOCP_discretized} is first solved based on the nonlinear model using the non-intrusive method from Section~\ref{sec:nonintrusive_sensi}. 
Subsequently, the linearized model~\eqref{eq:vehicle_dynamics_linear}--\eqref{eq:steering_dynamics} is used to build a PCE-surrogate model for the intrusive method from Section~\ref{sec:intrusive_sensi}. Due to the amount of $\np=6$ parameters, building a surrogate model with PCE-order~$\npce=3$ results in $\ell\nx = 336$ states and $\ell\ny = 84$ outputs with the lexicographical truncation scheme. 
On a standard Intel(R) Xeon(R) 6346 CPU, the computation time is reasonably small with $1.88\,\mathrm{s}$ using a quadrature order of $N_q=4$ and a sparse Smolyak grid. 
An increase of~$\npce$ or~$N_q$ did not lead to different simulation results.
Still, the mean computation time for a single simulation of the surrogate of $\E[t^\mathrm{c}_\mathrm{lin}]=0.58\,\mathrm{s}$ is comparable to a a single simulation of the nonlinear model which is $\E[t^\mathrm{c}_\mathrm{nl}]=0.46\,\mathrm{s}$.

This implies a significant impact on the computation time for an iteration of the optimization algorithm. The following estimates result for the linear and nonlinear case
\begin{align*}
	\E[t^\mathrm{c,iter}_\mathrm{lin}] &= \E[t^\mathrm{c}_\mathrm{lin}]\hphantom{N_\s }N_\mathrm{pop} \dim{\dof}  = 4.64\,\mathrm{min}\\
	\E[t^\mathrm{c,iter}_\mathrm{nl}] &= \E[t^\mathrm{c}_\mathrm{nl\hphantom{i}}]N_\s N_\mathrm{pop} \dim{\dof}  = \hphantom{.}184\,\mathrm{min},
\end{align*}
which differs mainly due to the $480$ instead of $2.4\cdot 10^4$ function calls per iteration. Certainly, the computation time for the nonlinear non-intrusive case can be reduced by using parallelization strategies, which is not considered here.\\[1ex]
\begin{figure}[tb]
	\centering
	\footnotesize
	\vspace{1.0ex}
	\includegraphics[width=\linewidth]{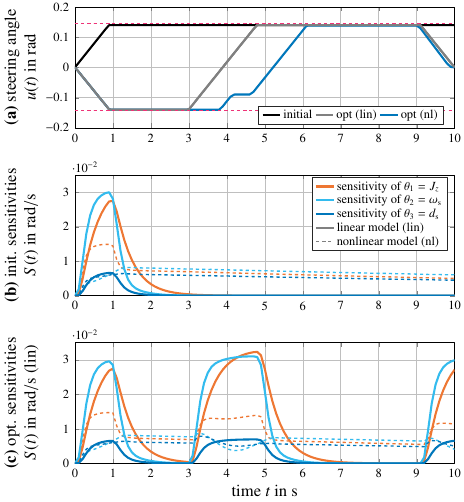}
	\mycaption{Optimal excitation signals for the vehicle systems}{The optimal inputs for the linear (grey) and nonlinear (dark blue) are shown in addition to an initial guess (black) and the amplitude restrictions (dashed lines) in (a). The other graphs show the resulting sensitivity trajectories for both models under the initial guess in (b) and the optimal input of the linear model in (c). The sensitivites according to the linear model are depicted in solid lines, the ones of the nonlinear model in dashed lines.}
	\vspace{-4ex}
	\label{fig:PCEvsOTOptimalSensitivities}
\end{figure}
\noindent\textit{Optimal input signals}. The resulting optimal input signals for the nonlinear and linear model are depicted in Fig.~\ref{fig:PCEvsOTOptimalSensitivities}(a). 
The ramps with maximum slope lead to the highest sensitivities. 
Due to the constraints of the input set~\eqref{eq:input_set_vehicle} the linear model would not benefit from using the fourth ramp segment due to the simplification. 
In contrast, the nonlinear model utilizes the full input space, applying all four ramps for system excitation.
Nonetheless, the two optimal input trajectories display comparable overall characteristics, leading to the conclusion that the linear model captures the relevant dynamics of the vehicle sufficiently accurate.\\[1ex]
\noindent\textit{Resulting sensitivity trajectories}. 
Figure~\ref{fig:PCEvsOTOptimalSensitivities}(b) illustrates the resulting sensitivity trajectories for both models, obtained by actuating each system with an initial input guess~$u^\mathrm{init}\notin\mathbb{U}_2$. This guess (depicted in Figure~\ref{fig:PCEvsOTOptimalSensitivities}(a)) incorporates a ramp segment with maximum slope, yet it fails to satisfy the terminal condition~$u(t_T)=0$.
For the sake of simplicity, the figure shows only the sensitivities w.r.t. the inertia~$J_\mathrm{z}$ (dark blue, the objective) and the two other common parameters (light blue/orange).
Again, both models exhibit similar sensitivity characteristics, however, due to the neglected dynamics, the linear model does not consider the sensitivity in steady-state.
In Figure~\ref{fig:PCEvsOTOptimalSensitivities}(c) the sensitivities for both models under the optimal input signals of the linear model are shown, i.e., the grey line in Figure~\ref{fig:PCEvsOTOptimalSensitivities}(a). Compared to the initial guess, both models exhibit significantly increased sensitivities for the inertia~$J_\mathrm{z}$ (dark blue). 
Despite being optimized for the inertia, it proved to be less dominant compared to other parameters in both simulations. 
A potential increase could be achieved with a less restrictive input set.\\[1ex]
\noindent\textit{Key findings.} The results indicate that the input design with the linearized model is able to find an optimal excitation that is comparable to the one obtained from the nonlinear model. 
Importantly, this aspect is an inherent system property, meaning it is independent of the UQ method chosen for the sensitivity analysis.
From a computational perspective, the intrusive method offers significant performance advantages over the non-intrusive approach, provided it is applicable to the system.
In this example, building the intrusive surrogate model requires negligible effort, even within an industrial application context.
While the intrusive method offers the major advantage of requiring no sampling for cost function evaluations, the non-intrusive method, conversely, provides the clear benefit of applicability to black-box models without further assumptions or simplifications.
\rev{\subsection{Parameter identification for the vehicle system}
\label{ssec:ResultsVehicle}
\revv{Finally, a test vehicle demonstrates the benefit of the method for system identification.}
\revv{The test vehicle is a modified VW ID.Buzz minibus with access to the steering control interface.}
The vehicle is shown in Fig.~\ref{fig:testvehicle}. 
\revv{The open steering control interface allows directly applying optimized control inputs to the vehicle.}
\revv{The parameter identification in this study is focused on the yaw inertia~$J_\mathrm{z}$ and on the stiffnesses~$K_\mathrm{f}$ and~$K_\mathrm{r}$, where the latter are lumped parameters that combine multiple Pacejka tire-model effects under the selected operating conditions. In addition, steering-dynamics parameters are calibrated separately at the actuator level and are therefore not included here.}}\\[1ex]
\begin{figure}[tb]
	\footnotesize
	\vspace{1.5ex}
	\includegraphics[width=0.48\textwidth]{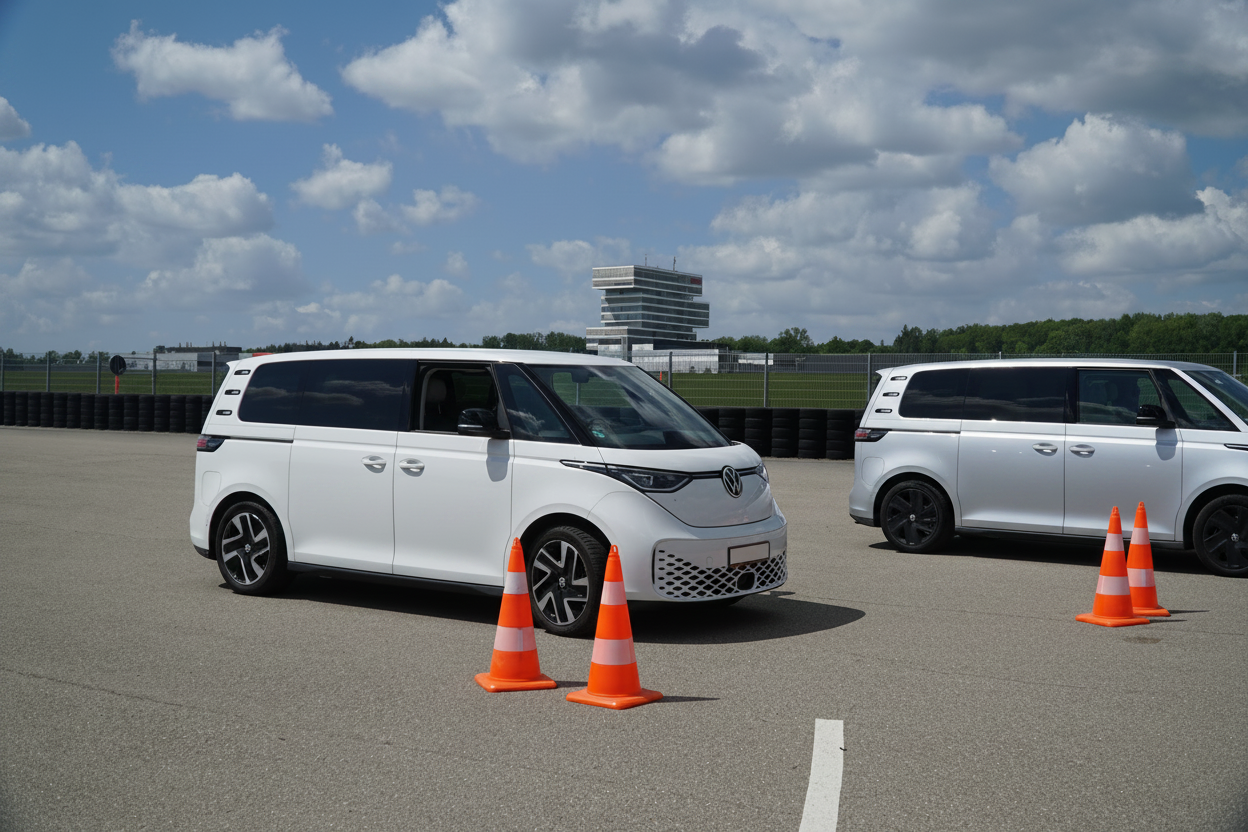}
	\caption{\raggedright \revv{Test vehicle ID.Buzz on a closed test track.}}
	\label{fig:testvehicle}
\end{figure}
\begin{table}[b]
	\mycaption{\rev{Identification results for different data sets}}{\rev{The ranges below correspond to the mean and the standard deviation of the identified parameters, i.e. $\E[\hat\theta_i] \pm \sigma(\hat\theta_i)$, for the given data set. Reference maneuvers (orange) and optimized ones (blue) are shown in Fig.~\ref{fig:measurement_overview}. The total time $T_\mathrm{tot}$ corresponds to the sum of the durations of the individual maneuvers.}}
	\label{tab:ident_results_vehicle}
	\centering
	\small
	\resizebox{\columnwidth}{!}{%
	\begin{tabular}{lcccc}
		\hline
		\textbf{Data Set} $\mathscr{D}$ & $T_\mathrm{tot}$ in s & $\hat J_\mathrm{z}$ in $\mathrm{kg\,m^2}$ & $\hat K_\mathrm{f}$ in $\mathrm{s/m}$ & $\hat K_\mathrm{r}$ in $\mathrm{s/m}$ \\
		\hline
		Ref. \#1--\#3 & 20.0 & $2397 \pm 1473$ & $6.245 \pm 0.512$ & $14.17 \pm 1.21$ \\
		Ref. \#1--\#5 & 40.0 & $2562 \pm \hphantom{0}858$ & $6.240 \pm 0.492$ & $14.18 \pm 1.20$ \\
		Opt. \#1--\#2 & 20.0 & $2542 \pm \hphantom{0}585$ & $6.291 \pm 0.481$ & $13.98 \pm 1.19$ \\
		\hline
	\end{tabular}%
	}
\end{table}
\begin{figure*}[!p]
	\centering
	\footnotesize
	\includegraphics{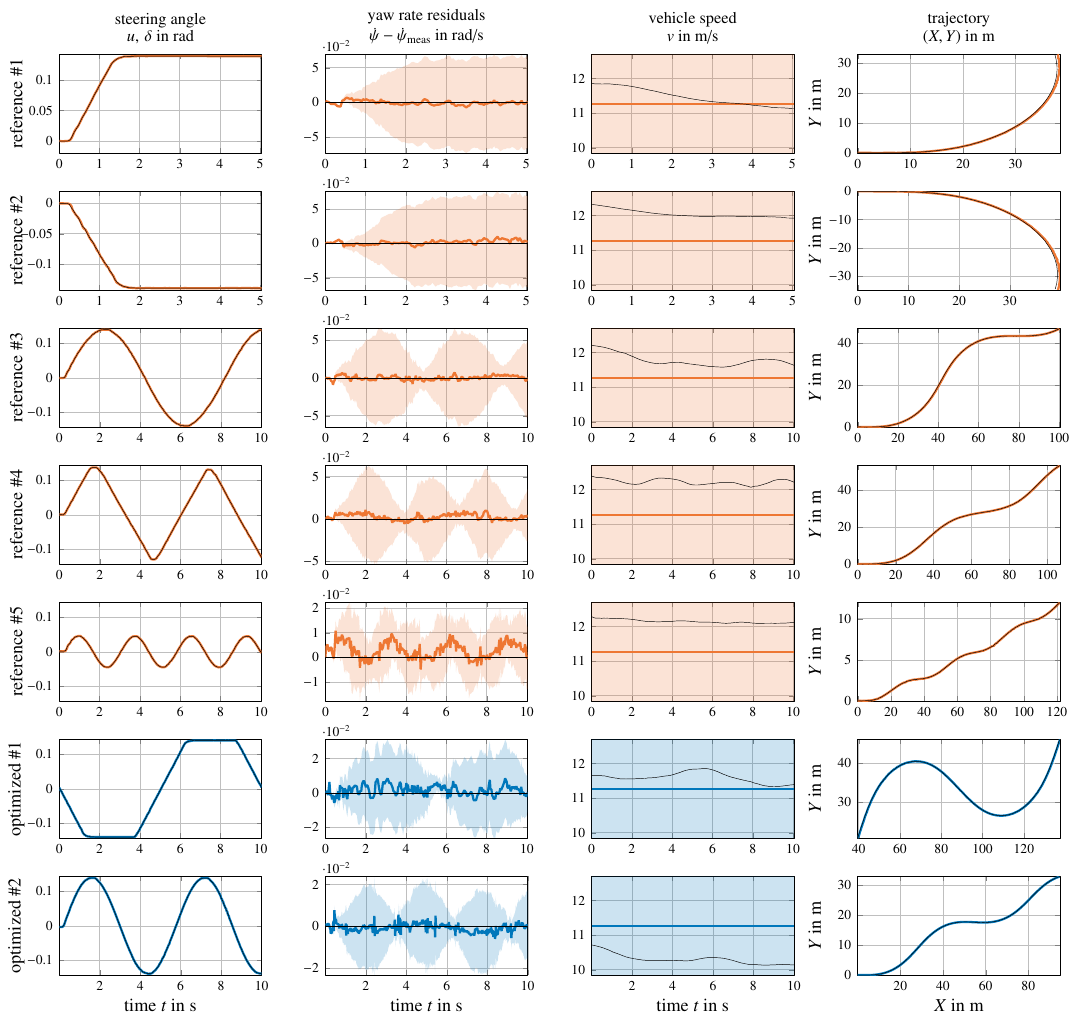}
	\vspace*{2ex}
	\mycaption{\rev{Identification results for a reference input set and two optimized inputs}}{\rev{Each row of this figure corresponds to one maneuver; the row label on the left indicates whether it belongs to the reference set~(Ref.~\#1--\#5, orange labels) or to the optimized set~(Opt.~\#1--\#2, blue labels). The columns show, from left to right, the desired and actual steering angles~$u$, $\delta$, the yaw-rate residual~$\dot\psi-\dot\psi_\mathrm{meas}$, the longitudinal speed~$v$, and the resulting vehicle trajectory in absolute coordinates. Within each graph, thin black solid curves denote the measured signals, thick coloured lines the expected values, and the shaded areas the $5\%$--$95\%$ confidence bounds.}}
	\label{fig:measurement_overview}
\end{figure*}
\rev{\noindent\textit{Considered maneuvers}.
To validate the proposed optimal excitation design, the comparison contrasts its resulting parameter estimates against those obtained from standard field-test maneuvers, with both methods applied to the same test vehicle.\\
The measurement results for all maneuvers are compiled in Fig.~\ref{fig:measurement_overview}, where each row corresponds to one maneuver. The columns show, from left to right, the applied steering angle~$\delta$, the yaw-rate residual~$\dot\psi - \dot\psi_\mathrm{meas}$ between the model prediction and the measurement, the longitudinal speed~$v$ (from which the covered speed range can be read directly), and the resulting vehicle trajectory in absolute coordinates. \revv{All considered maneuvers start from the same operating conditions.}
Further, note that the following analysis is based on the linear model~\eqref{eq:vehicle_dynamics_linear}--\eqref{eq:steering_dynamics}, which is used to identify the parameters based on the measured data. \revv{As a consequence, the speed~$v$ is treated as a static uncertain parameter in the linear model; its confidence set covers all performed measurements, and, as visible in Fig.~\ref{fig:measurement_overview}, the speed variations are slow compared to the lateral dynamics such that no relevant longitudinal-lateral coupling or significant bias is expected under the considered operating conditions.}\\
As a baseline, three standard automotive maneuvers are used: cornering left (Ref.~\#1), cornering right (Ref.~\#2), and a sinusoidal slalom (Ref.~\#3). \revv{The cornering maneuvers consist of a single turn with the maximum steering rate $\dot{u}^+$ to the maximum steering angle $u^+$ with a during of~$5.0\,\mathrm{s}$, while the $10.0\,\mathrm{s}$ sinusoidal slalom features with the maximum steering amplitude $u^+$ frequency of $0.13\,\mathrm{Hz}$.}
These represent common practice in vehicle testing and together form the first reference data set with a total duration of $T_\mathrm{tot} = 20\,\mathrm{s}$. \revv{They were selected as representative baseline handling maneuvers because they cover the standard left/right steady-cornering and oscillatory slalom excitations commonly used in vehicle-dynamics calibration and validation.} To obtain a more informative reference set, two additional maneuvers~\revv{are chosen before the optimization}: a ramp-like slalom (Ref.~\#4) and a high-frequency slalom with frequency $f=0.36\,\mathrm{Hz}$ (Ref.~\#5). All five reference maneuvers together constitute the second reference data set with $T_\mathrm{tot} = 40\,\mathrm{s}$. All reference maneuvers are depicted in orange in Fig.~\ref{fig:measurement_overview}. The steering amplitude and slope are chosen to match those of the optimal inputs from Section~\ref{ssec:OIDVehicle}, i.e. each maneuver uses either the maximum amplitude~$u^+$ or the maximum rate~$\dot{u}^+$.\\
Both optimized maneuvers are shown in blue in Fig.~\ref{fig:measurement_overview}. The ramp-based optimal input from Section~\ref{ssec:OIDVehicle} is used directly as the first optimized excitation (Opt.~\#1). To broaden the comparison, the optimization is additionally carried out for a sinusoidal input set based on the linear model with analogous constraints,}
\rev{
\begin{align*}
\mathbb{U}_3 =\{ u(t) = u_0 \sin(2\pi f t - \varphi)~|&~u(0)=0,~|u(t)| < u^+, \\ &|\dot{u}(t)| < \dot{u}^+, \forall t \in [0, T]  \},
\end{align*}}
\rev{yielding the optimal sinusoidal slalom maneuver denoted Opt.~\#2 \revv{with $u_0=1.39\,\mathrm{rad}$, $f=0.17\,\mathrm{Hz}$ and $\varphi=0$}.}\\[1ex]
\rev{\noindent\textit{Identification results}.
The identified parameters for the three data sets are summarized in Table~\ref{tab:ident_results_vehicle}. The ranges correspond to the mean and standard deviation of the identified parameters, i.e. $\E[\hat \theta_i] \pm \sigma(\hat \theta_i)$ for each data set.}

\rev{The parameter identification is performed using a unbounded least-squares approach~\cite{ljung1999system}, where the cost function is defined as the sum of squared yaw-rate residuals across all maneuvers in the \revv{respective data set~$\mathscr{D}$:}}
\revv{
\begin{align*}
J(\param) = \sum_{i\in\mathscr{D}} \sum_{k=1}^{k_{\max}^{(i)}} \frac{1}{t_{k_{\max}^{(i)}}}\left( \dot\psi^{(i)}(t_k; \param) - \dot\psi^{(i)}_\mathrm{meas}(t_k) \right)^2.
\end{align*}}
\revv{The measurements are sampled with a mean sample time of~$32.6\,\mathrm{ms}$. As can be seen above, each maneuver contribution is normalized by its duration. All maneuvers start from the same initial point. As initial point for the parameter estimation, the prior mean~$\E[\param]$ of the stochastic parameters is used.}

\revv{Analogous to the spring-damper example in Section~\ref{sec:IllustrativeExampleOID}, the reported means and standard deviations are obtained from a Bayesian posterior approximation~\cite{bishop2006}.} The resulting parameter estimates are then used to compute the confidence bounds shown as shaded areas in Fig.~\ref{fig:measurement_overview}, which represent the $5\%$--$95\%$ confidence intervals.
\rev{\noindent\textit{Discussion}.
The results in Table~\ref{tab:ident_results_vehicle} indicate that the cornering stiffness parameters $K_\mathrm{f}$ and $K_\mathrm{r}$ are consistently well excited across all maneuvers and therefore easier to identify, leading to comparable estimates in all three data sets. \revv{The remaining uncertainty level of the stiffnesses $K_\mathrm{f}$ and $K_\mathrm{r}$ is already acceptable for the targeted downstream simulation and control analyses.} Also the expected values of all three parameters are consistent across the different data sets.}

\rev{In contrast, the standard reference set Ref.~\#1--\#3 provides only weak information on the yaw inertia $J_\mathrm{z}$, as evidenced by the large standard deviation of $1473\,\mathrm{kg\,m^2}$. Extending the reference data to Ref.~\#1--\#5 by adding the two further slalom maneuvers improves the identification of $J_\mathrm{z}$ noticeably, but at the cost of doubling the measurement time to $40\,\mathrm{s}$. \revv{The optimized input set Opt.~\#1--\#2 achieves a standard deviation of only $585\,\mathrm{kg\,m^2}$ with the same measurement time of $20\,\mathrm{s}$ as Ref.~\#1--\#3, corresponding to a reduction of $60.3\%$.}}
\begin{figure}[tb]
	\centering
	\footnotesize
	\vspace{1.5ex}
	\includegraphics[width=\linewidth]{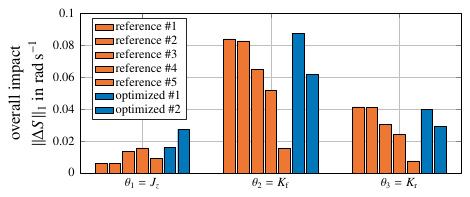}
	\mycaption{\rev{Ranking of maneuver sensitivities}}{\rev{Comparison of the overall impact score $\|\Delta S\|_1$ for the reference maneuvers (orange) and optimized maneuvers (blue) from Fig.~\ref{fig:measurement_overview} with respect to the inertia $\theta_1=J_z$, and the cornering stiffnesses $\theta_2=K_\mathrm{f}$, $\theta_3=K_\mathrm{r}$.}}
	\label{fig:fig_ranking_maneuvers}
\end{figure}
\rev{The improved identification performance of the optimized maneuvers can be explained by their significantly higher sensitivity with respect to the inertia $J_\mathrm{z}$, which is the parameter of interest. This is illustrated in Fig.~\ref{fig:fig_ranking_maneuvers}, where the overall impact score $\|\Delta S\|_1$ for each maneuver is compared across the reference and optimized maneuvers. The optimized maneuvers (blue) exhibit substantially higher sensitivity for $J_\mathrm{z}$ compared to the reference maneuvers (orange), while maintaining comparable sensitivities for the cornering stiffnesses $K_\mathrm{f}$ and $K_\mathrm{r}$. 
\revv{This is consistent with the comparable uncertainty levels of $K_\mathrm{f}$ and $K_\mathrm{r}$ reported in Table~\ref{tab:ident_results_vehicle}.
} 
\revv{Quantitatively, for $J_\mathrm{z}$ the mean impact score increases from $0.0100$ over Ref.~\#1--\#5 to $0.0216$ over Opt.~\#1--\#2 (about $+116\%$).
} This confirms that the optimal excitation design successfully tailored the maneuvers to enhance the identifiability of the target parameter, leading to improved estimation quality within the same measurement time.}

\rev{It is worth emphasizing that, beyond the design of new tailored excitations, the parameter-wise sensitivity signatures depicted in Fig.~\ref{fig:fig_ranking_maneuvers} are already a valuable analysis tool for \emph{existing} measurement catalogues in practice. By computing the impact score $\|\Delta S\|_1$ for each maneuver and parameter, redundant maneuvers carrying similar information can be identified and removed, gaps in the parameter coverage of a catalogue can be detected, and individual maneuvers can be ranked according to their identification value per measurement time. The proposed framework can therefore be used both to generate new optimal maneuvers and to systematically assess and curate established field-test catalogues.}
\section{Conclusion and Outlook}
\label{sec:Conclusion}
This paper presents a systematic framework for optimal excitation design in parameter identification, formulated as a stochastic optimal control problem that maximizes global sensitivity measures while accounting for model uncertainties and measurement noise. It develops two complementary uncertainty quantification approaches to make the computationally intractable problem tractable. For systems with known mathematical structure, the intrusive polynomial chaos expansion method constructs deterministic surrogate models that enable rapid sensitivity evaluation through a single simulation, achieving significant computational speedups. For black-box models, a novel non-intrusive approach based on optimal transport theory using Wasserstein distances requires no knowledge of internal system dynamics.

The vehicle dynamics application demonstrated that both methods generate comparable optimal excitations, with the intrusive method providing substantial computational advantages when applicable. The framework successfully produces parameter-specific excitations tailored to identification objectives.
\rev{\revv{The methodology was further validated experimentally on a test vehicle, where the optimized maneuvers reduced the standard deviation of the yaw-inertia estimate by more than a factor of two compared to a standard reference set of equal measurement time, and even outperformed a reference set of twice the duration.} \revv{This confirms that, for the considered vehicle lateral-dynamics identification problem, the proposed framework translates theoretical sensitivity gains into tangible improvements in identification quality per measurement time under real-world conditions.}}

Future work includes the integration with online parameter identification algorithms and the extension to multi-output and constrained-output settings. The methodology has broad applicability across engineering disciplines, providing a foundation for advancing virtual engineering processes and reducing reliance on extensive experimental programs as systems become increasingly complex and expensive to test.

\footnotesize
\balance
\setlength{\bibsep}{0pt plus 0.3ex}
\bibliographystyle{unsrt}
\bibliography{references}

\end{document}